\newcommand{\tikzmark}[2]{\tikz[overlay,remember picture,baseline] \node [anchor=base] (#1) {$#2$};}
\newcommand{\DrawArrow}[3][]{%
  \begin{tikzpicture}[overlay,remember picture]
    \draw[-latex, thick, #1] ($(#2.south)+(0,0.3cm)$) -- ($(#3.north)-(0,0.25cm)$);
  \end{tikzpicture}}
\newtheorem{theorem}{Theorem}[section]
\newtheorem{lemma}[theorem]{Lemma}
\newtheorem{proposition}[theorem]{Proposition}
\newtheorem{corollary}[theorem]{Corollary}
\newtheorem{cor}[theorem]{Corollary}
\theoremstyle{definition}
\newtheorem*{ack}{Acknowledgements}
\newtheorem{remark}[theorem]{Remark}
\newtheorem{rem}[theorem]{Remark}
\newtheorem{example}[theorem]{Example}
\newtheorem{dfn}[theorem]{Definition}
\newtheorem{assumption}[theorem]{Assumption}
\numberwithin{equation}{section} \numberwithin{figure}{section}
\let\ker\relax
\let\res\relax
\DeclareMathOperator{\ker}{Ker}
\DeclareMathOperator{\res}{Res}
\DeclareMathOperator{\Aut}{Aut}
\DeclareMathOperator{\disc}{Disc}
\DeclareMathOperator{\ch}{char}
\DeclareMathOperator{\Jac}{Jac}
\DeclareMathOperator{\jac}{Jac}
\newcommand{\GL}{\textrm{GL}}
\newcommand{\SL}{\textrm{SL}}
\newcommand\inj{\hookrightarrow}
\newcommand{\lra}{\longrightarrow}
\newcommand{\LL}{\mathscr{L}}
\newcommand{\MM}{\mathscr{M}}
\newcommand{\KK}{\mathcal{K}}
\newcommand\FF{\mathbf{F}}
\newcommand\PP{\mathbf{P}}
\newcommand\ZZ{\mathbf{Z}}
\newcommand\NN{\mathbf{N}}
\newcommand\CC{\mathbf{C}}
\newcommand\Aff{\mathbf{A}}
\renewcommand{\leq}{\leqslant}
\renewcommand{\geq}{\geqslant}
\renewcommand{\d}{\mathrm{d}}
\newcommand{\kbar}{{\newoverline[.75]{K}}}
\newsavebox\myboxA
\newsavebox\myboxB
\newlength\mylenA
\newcommand*\newoverline[2][0.75]{
    \sbox{\myboxA}{$\m@th#2$}%
    \setbox\myboxB\null
    \ht\myboxB=\ht\myboxA
    \dp\myboxB=\dp\myboxA
    \wd\myboxB=#1\wd\myboxA
    \sbox\myboxB{\hspace{2.3pt}$\m@th\overline{\copy\myboxB}$}%
    \setlength\mylenA{\the\wd\myboxA}
    \addtolength\mylenA{-\the\wd\myboxB}
    \ifdim\wd\myboxB<\wd\myboxA
       \rlap{\hskip 0.5\mylenA\usebox\myboxB}{\usebox\myboxA}
    \else
        \hskip -0.5\mylenA\rlap{\usebox\myboxA}{\hskip 0.5\mylenA\usebox\myboxB}
    \fi}
\newcommand{\isog}{\varphi}
\newcommand\degree{n}
\newcommand{\isom}{\alpha}
\newcommand{\w}{{\omega}}
\num\newcommand{\map1}{\phi_1}
\num\newcommand{\mapd1}{f_1}
\num\newcommand{\E1}{E_1}
\num\newcommand{\zero1}{O_1}
\num\newcommand{\mapd2}{f_2}
\num\newcommand{\map2}{\phi_2}
\num\newcommand{\E2}{E_2}
\num\newcommand{\zero2}{O_2}  
\num\newcommand{\proj1}{\pi_1}
\num\newcommand{\proj2}{\pi_2}
\num\newcommand{\projpush1}{\pi_{1*}}
\num\newcommand{\projpush2}{\pi_{2*}}
\num\newcommand{\mapdpush1}{f_{1*}}
\num\newcommand{\mapdpush2}{f_{2*}}
\num\newcommand{\mappush1}{\phi_{1*}}
\newcommand{\smallmat}[4]
  {\bigl[ \begin{smallmatrix}
	#1 & #2\\
	#3 & #4
  \end{smallmatrix} \bigr]}
\newcommand{\hesse}{\mathcal{H}}
\newcommand{\Ei}{E_i}
\newcommand{\mapdi}{\mapdd_i}
\newcommand{\mapi}{\mapu_i}
\newcommand{\proji}{\pi_i}
\newcommand{\W}{Weier\-stra{\ss}{ }}
\newcommand\mapu{\phi}
\newcommand\mapdd{f}
\newcommand\invol{\iota}
\newcommand{\bigindex}[1]{_{\hbox{$#1$}}}
\newcommand{\comp}{\raisebox{.36pt}{\hspace{.2em}\scalebox{.8}{$\circ$}\hspace{.2em}}}
\newcommand{\minus}{-}
\def\chk#1{#1^{\smash{\raisebox{-1pt}{\scalebox{1}[2]{\rotatebox{90}{\textup{\guilsinglleft}}}}}}}
\newdimen\picsize
\title{Families of (3,3)-split Jacobians}
\author[1]{Martin Djukanovi\'c}
\subjclass[2010]{14H40, 14H45, 14H52, (11G10)}
\keywords{split jacobian, isogeny, elliptic curve, Igusa invariants, Hesse pencil}
\email{djukanovic@gmail.com}
\begin{document}
\begin{abstract}  
We compute all the ``special'' cases of $(3,3)$-split Jacobians and we parametrize the Igusa--Clebsch invariants of curves of genus two whose Jacobian is $(3,3)$-isogenous to a product of two elliptic curves from the Hesse pencil.
\end{abstract}

\maketitle
\section{Introduction} 
If $C$ is a curve of genus two, equipped with an \emph{optimal} covering $\mapu\colon C\to E$ of degree~$n$ of an elliptic curve then there exists an elliptic curve~$E'$ such that the Jacobian~$\jac(C)$ is isogenous to $E\times E'$ via an isogeny of degree $n^2$. Such a Jacobian is said to be $(n,n)$-split. The classical treatment of cases $n\leq 4$, in terms of elliptic integrals, can be found in the works of Legendre, Jacobi, Goursat, and others; e.g. see \cite{jacobi}, \cite{goursat}, \cite{brioschi}, \cite{bolza}. A modern treatment can be found in e.g.~\cite{kuhn}, 
\cite{frey}, 
\cite{freykani}, 
\cite{shaska2}, 
\cite{shaska}, 
\cite{howe}, 
\cite{bruin}, 
\cite{shaska3}, 
\cite{kumar}, dealing with various cases with $n\leq 11$. 
The problem of finding the curve $E'$, given the map $\mapu$, was considered in~\cite{kuhn}. Explicit examples and infinite families appear in~\cite{howe}, \cite{kuhn}, \cite{shaska} for $n=3$, which is the case that is the topic of this paper. In the first three sections, we review known results and offer minor corrections and clarifications. In the remaining sections, we consider a similar problem -- given two elliptic curves $E$ and~$E'$, find the isomorphism class of a genus-$2$ curve $C$ such that~$\jac(C)\sim E\times E'$, if it exists. When dealing with both of these problems, one is not only interested in concrete examples, but also in parametrizations of infinite families. In Section~5 we give a parametrization of the Igusa--Clebsch invariants of~$C$ in terms of two parameters that define a pair of elliptic curves from the Hesse pencil. We believe that this approach can be generalized for various $n\geq 4$.

\subsection*{Notations and definitions}
Throughout the paper,~$K$ denotes a field of characteristic $\ch(K)\neq 2$ and~$\kbar$ denotes an algebraic closure of $K$. Unless otherwise specified, all varieties are projective and defined over~$K$. If~$C$ is a genus-$2$ curve over~$K$, we say that its Jacobian $\jac(C)$ is \emph{split} if it is isogenous over~$K$ to a product of two elliptic curves. By a \emph{covering} we mean a finite, surjective, separable morphism. Given a divisor~$D$ on a $K$-variety, we denote by~$L(D)$ the $K$-vector space of global sections of the invertible sheaf~$\LL(D)$ associated to~$D$. Given a commutative ring~$R$ and polynomials $F,G\in R[x]$, the resultant of~$F$ and~$G$ is denoted by~$\res_x(F,G)$  and the discriminant of $F$ is denoted by~$\disc_x(F)$.

\section{A curve of genus two covering a curve of genus one}
\label{section:coverings}
Let $C$ be a curve of genus two, defined over~$K$ and equipped with a covering $\mapu\colon C\to E$ of degree~$\degree$, where~$E$ is a curve of genus one and~$\degree$ is coprime to~$\ch(K)$. Recall that~$C$ is hyperelliptic since the linear system defined by its canonical divisor~$K_C$ defines a $2$-to-$1$ map to~$\PP^1$, by Riemann--Roch. Let~$\invol$ denote the hyperelliptic involution on~$C$, so that $C/\invol\cong\PP^1$. For a \W point \mbox{$W\in C(\kbar)$}, one can define the Abel--Jacobi map $\varepsilon:C\inj\jac(C)$, given by~\mbox{$P\mapsto[P-W]$,} and embed $C$ into its Jacobian. One also has a corresponding isomorphism $E\cong\jac(E)$, given by~\mbox{$P\mapsto[P-\mapu(W)]$.} The morphism \mbox{$\mapu : C\to E$} induces a group morphism $\mapu_* : \jac(C) \to E$ so that~$\mapu_* \comp \varepsilon = \mapu$. The group morphism~$\mapu_*$ commutes with $[\minus 1]$ and therefore induces a map $\jac(C)/[\minus 1] \to E/[\minus 1]$. Thus we obtain the following commutative diagram
\begin{equation}
\label{diag:mapd2}
\begin{tikzcd}
C \arrow[r,"\mapu"] \arrow{d}{\projj[C]} & E \arrow{d}{\projj[E]}\\
C/\bigindex{\invol} \arrow[r,"\mapdd"] & E/[\minus 1]
\end{tikzcd}
\end{equation}
Here $\pi_C$ and $\pi_E$ denote the canonical maps.

\begin{remark}
The Abel--Jacobi map $\varepsilon$ is in general only defined over a quadratic extension of $K$, but this issue is easily resolved when $\degree$ is odd, as we shall explain below.
\end{remark}

\begin{assumption}
From now on, we will assume that the covering map $\mapu\colon C\to E$ is always \emph{optimal}, which is to say that it does not factor through a non-trivial isogeny.
\end{assumption}

\subsection{Ramification analysis}
Kuhn~\cite{kuhn} analysed the ramification of the map~$\mapdd$. We recall the main results. The map $\pi_C$ has six geometric ramification points, whereas~$\pi_E$ has four, by Riemann--Hurwitz. These are, of course, the points fixed by the corresponding involutions. Let $W_1,\dots ,W_6$ denote the ramification points of~$\pi_C$ and let $T_1,\dots,T_4$ denote the ramification points of~$\pi_E$. Moreover, let~$w_1,\dots, w_6$ and~\mbox{$t_1,\dots,t_4$} denote their respective images under the corresponding canonical maps~$\pi_C$ and~$\pi_E$. It is clear from the above that $\{\mapu(W_i)\}\subseteq\{T_j\}$ and~$\{\mapdd(w_i)\}\subseteq\{t_j\}$. 
\par

By Riemann--Hurwitz, we have that $\mapu$ has ramification degree $2$, meaning that $\mapu$ either doubly ramifies at two distinct points or it has one triple ramification point. We distinguish two cases -- either the ramification of~$\mapu$ occurs above some~$T_j$ or it does not. These are referred to as the ``special'' case and the ``general'' case, respectively. As~$\invol$ acts on the ramification divisor $R_\mapu$ of $\mapu$, if there are two distinct ramification points, they cannot lie above two distinct~$T_j$. In the generic case, the map \mbox{$\pi_E\comp\mapu=\mapdd\comp\pi_C$} ramifies at~$4n$ double points that lie above the~$T_j$. Since~$\pi_C$ ramifies at six double points, we have that~$\mapdd$ ramifies at $\tfrac{1}{2}(4n-6)=2n-3$ double points above the $t_j$, none of which is any of the $w_i$. By Riemann--Hurwitz, $\mapdd$ has ramification degree~\mbox{$2n-2$}, which means that there is one more doubly ramified point that does not lie above the~$t_j$. In the special case, all of the ramification lies above the~$t_j$. Since~$\mapdd$ is a finite map between smooth varieties, it is flat and every fibre has $\degree=\deg\mapdd$ geometric points, counting with multiplicities.
\begin{lemma}
\label{lemma:kuhn}
With the notations as above, for every $i\in\{1,2,\dots,6\}$ the divisor $\mapdd^{*}(\sum_{j=1}^{4}t_j)$ contains $w_i$ with odd multiplicity and any other points with even multiplicity.
\end{lemma}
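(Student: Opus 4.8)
The plan is to compute the ramification of the composite map $\mapdd\comp\pi_C = \pi_E\comp\mapu$ at each \W point $W_i$, in two different ways, and compare. Fix $i$ and set $w_i=\pi_C(W_i)$. On the left, $\pi_C$ is ramified of order $2$ at $W_i$, so $\pi_C^*(w_i) = 2W_i$. On the right, $\pi_E$ is ramified of order exactly $2$ precisely at the four points $T_1,\dots,T_4$ and is unramified elsewhere. I would first observe that $\mapu(W_i)$ is one of the $T_j$: this is forced by the hypothesis $\{\mapu(W_i)\}\subseteq\{T_j\}$ recalled just before the lemma (it follows from the fact that $\mapu$ carries the $2$-torsion-like structure of $\invol$-fixed points to $[-1]$-fixed points, since $\mapu_*$ commutes with $[-1]$). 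Hence $\mapdd(w_i)$ is one of the $t_j$, and so $w_i$ appears in the support of $\mapdd^*(\sum_j t_j)$.

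Next I would localise at $W_i$ and read off multiplicities from the chain of maps. Write $e$ for the ramification index of $\mapu$ at $W_i$ (so $e\in\{1,2,3\}$, with $e>1$ only in the ``special'' case and for at most two of the $W_i$). Going around the square \eqref{diag:mapd2}: the multiplicity with which $w_i$ occurs in $\mapdd^*(t_j)$, call it $m_i$, satisfies
\begin{equation}
\label{eq:mult-chain}
m_i\cdot e_{\pi_C}(W_i) \;=\; e_{\pi_E}(\mapu(W_i))\cdot e,
\end{equation}
because both sides compute the ramification index of the composite $\pi_E\comp\mapu = \mapdd\comp\pi_C$ at $W_i$ (ramification indices are multiplicative along a composition, and $\pi_C^*(w_i)=2W_i$ picks out exactly the contribution of $W_i$). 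Now $e_{\pi_C}(W_i)=2$ and $e_{\pi_E}(\mapu(W_i))=2$ since $\mapu(W_i)$ is one of the $T_j$, so \eqref{eq:mult-chain} gives $m_i = e$. Since $e$ is odd unless $e=2$ — and I must handle $e=2$ — this already shows $w_i$ occurs with odd multiplicity whenever $e\in\{1,3\}$. For the case $e=2$: here $W_i$ is one of the (at most two) ramification points of $\mapu$, and $\invol$ must swap the two ramification points or fix one triply; a doubly-ramified point of $\mapu$ that is $\invol$-fixed would force its partner to be $\invol$-fixed too, i.e.\ another $W_j$ with the same image $T_j$ — but then $\mapu^*(T_j)$ would have degree $\geq 4$ at two \W points plus more, and a parity/degree count using that $\deg\mapu = \degree$ is odd (for the statement to be non-vacuous one works in the odd-degree setting as flagged in the Remark) rules this out; so in fact $e=2$ cannot occur at a \W point and $m_i\in\{1,3\}$, always odd.

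Finally, for the ``any other points with even multiplicity'' clause: let $Q\in C/\invol$ lie over some $t_j$ with $Q\neq w_i$ for all $i$. Then $\pi_C^{-1}(Q)$ consists of two points $P,\invol P$ neither of which is a \W point, and $\pi_C$ is étale at each. Lift the multiplicity identity: if $Q$ occurs in $\mapdd^*(t_j)$ with multiplicity $m$, then each of $P,\invol P$ occurs in $\mapu^*(\pi_E^*(t_j))$ with multiplicity $m\cdot e_{\pi_E}(\mapu(P))/1$; since $\mapu(P)$ need not be a $T_j$ a priori, I would instead argue directly that $\invol$ acts freely on the fibre $\pi_C^{-1}(Q)$ and permutes the ramification data of $\mapu$ over it, so the points of $C$ above $Q$ come in $\invol$-pairs $P,\invol P$ with equal ramification index $\varepsilon(P)$ in $\mapu$; then $m\cdot 1 = \varepsilon(P)\cdot e_{\pi_E}(\mapu(P))/2$ forces, after summing the two contributions, $2m = \varepsilon(P)e_{\pi_E}(\mapu(P)) + \varepsilon(\invol P)e_{\pi_E}(\mapu(\invol P)) = 2\,\varepsilon(P)e_{\pi_E}(\mapu(P))$, but the point is that $w_i\notin\mapdd^{-1}(t_j)$ over such $Q$; more simply, $\mapdd^*(\sum t_j) - \sum_i w_i$ is $\invol$-invariant as a divisor on $C/\invol$ pulled back to $C$, hence has even coefficients everywhere off the branch locus of $\pi_C$. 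I would make this last point rigorous by pulling $\mapdd^*(\sum_j t_j)$ back to $C$ via $\pi_C$: its pullback is $\invol$-invariant, equals $\pi_C^*\mapdd^*(\sum t_j) = \mapu^*\pi_E^*(\sum t_j) = \mapu^*(\sum 2T_j) = 2\mapu^*(\sum T_j)$, which has all-even coefficients; since $\pi_C^*(w_i) = 2W_i$ but $\pi_C^*(Q) = P + \invol P$ for $Q$ not a branch point, comparing coefficients forces $m_i$ to have the opposite parity to the (even) coefficient at $W_i$ contributed by the branching — i.e.\ $m_i$ odd — and forces the coefficient at every other $Q$ to be even.

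The main obstacle I expect is the bookkeeping in the last paragraph: one must be careful that ``even'' is claimed for points of $C/\invol$, not of $C$, and the factor of $2$ from $\pi_C^*(w_i) = 2W_i$ is precisely what flips the parity at the \W points while preserving it elsewhere. Handling the potential doubly-ramified \W point of $\mapu$ (the $e=2$ subtlety) is the only other delicate point, and it is where the optimality assumption and the odd-degree normalisation do the work.
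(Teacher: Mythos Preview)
The paper defers the proof to external references, so there is no in-paper argument to compare against; what follows is an assessment of correctness.

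Your overall strategy is sound, and the ``even multiplicity at non-Weierstrass points'' clause is handled correctly by the pullback argument in your last paragraph: $\pi_C^*\mapdd^*(\sum_j t_j)=2\,\mapu^*(\sum_j T_j)$ has all even coefficients, and since $\pi_C^*(Q)=P+\iota P$ at a non-branch point $Q$, the multiplicity of $Q$ downstairs equals the coefficient of $P$ upstairs, hence is even.

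The gap is in showing that $m_i$ is odd. You correctly obtain $m_i=e_{\mapu}(W_i)$ from multiplicativity of ramification indices along the two sides of the square, but neither of your attempts to exclude $e_{\mapu}(W_i)=2$ succeeds. In the degree argument you assert that two Weierstrass ramification points would map to the \emph{same} $T_j$, but $\iota$ fixing both points says nothing about their images coinciding; and even granting this, the sketch ``$\mapu^*(T_j)$ would have degree $\geq 4$ at two Weierstrass points plus more'' does not produce a contradiction. (The lemma moreover holds for every $n$ coprime to $\ch(K)$, so appealing to oddness of $n$ is already a warning sign.) Your second attempt, via the pullback, is vacuous at Weierstrass points: since $\pi_C^*(w_i)=2W_i$, the coefficient of $W_i$ in $\pi_C^*\mapdd^*(\sum_j t_j)$ is $2m_i$, which is even for every $m_i$. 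There is no parity flip.

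What is missing is a local equivariance computation. From the commutative square one deduces $\mapu\circ\iota=[-1]\circ\mapu$ (otherwise $\mapu$ would factor through $C/\iota\cong\PP^1$). Choose local uniformisers $u$ at $W_i$ and $v$ at $T_j=\mapu(W_i)$ with $\iota^*u=-u$ and $[-1]^*v=-v$; this is possible since each involution has an isolated fixed point there. Writing $\mapu^*v=c\,u^{e}+O(u^{e+1})$ with $e=e_{\mapu}(W_i)$ and applying $\iota^*\mapu^*=\mapu^*[-1]^*$ to $v$ gives $(-1)^{e}c=-c$, hence $e$ is odd. This is the step that forces $m_i$ odd, and it requires no hypothesis on the parity of $n$.
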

\begin{proof}
See Lemma in \S1 of \cite{kuhn} or Lemma 2.1 in \cite{freykani}.
\end{proof}

\begin{lemma}
\label{lemma:main_lemma}
Let $C$ be a curve of genus two and let $\map1\colon C\to \E1$ be an optimal covering of an elliptic curve $(\E1,\zero1)$ with $\deg\map1=n$ coprime to $\ch(K)$. Then, possibly after extending the base field, there exists an elliptic curve $(\E2,\zero2)$, an optimal covering $\map2\colon C\to \E2$, and an isogeny $\isog\colon \E1\times \E2\to\jac(C)$ such that:
\begin{enumerate}[label=(\arabic*)]
\item $\deg\map2=n$;
\item $\isog=\map1^*+\map2^*$;
\item $\deg\isog=n^2$;\item $\ker(\isog)\cong \E1[n]\cong \E2[n]$.
\end{enumerate}
\end{lemma}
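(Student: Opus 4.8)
The plan is to realise both $E_2$ and the covering $\phi_2$ inside the Jacobian $A:=\jac(C)$ by means of its principal polarisation and the theory of complementary abelian subvarieties, so that the whole lemma comes down to computing the type of one induced polarisation. Recall first the two homomorphisms attached to $\phi_1$: the pull‑back $\phi_1^{*}\colon E_1\cong\jac(E_1)\to A$ and the norm map $\phi_{1*}\colon A\to\jac(E_1)\cong E_1$, which satisfy $\phi_{1*}\comp\phi_1^{*}=[n]$. Optimality of $\phi_1$ is equivalent to $\phi_1^{*}$ being a closed immersion (otherwise $\phi_1$ factors through the isogeny $E_1\to E_1/\ker\phi_1^{*}$); write $E_1':=\phi_1^{*}(E_1)\subseteq A$, an elliptic subvariety.

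Next I would identify $E_2$. Using the canonical autoduality $\widehat{\jac(C)}\cong\jac(C)$, the analogous statement for $\jac(E_1)$, and the fact that $\phi_{1*}$ is the dual of $\phi_1^{*}$, the restriction of the principal polarisation of $A$ to $E_1'$ becomes, after transport along $\phi_1^{*}\colon E_1\xrightarrow{\sim}E_1'$, the endomorphism $\phi_{1*}\comp\phi_1^{*}=[n]$; hence $E_1'$ carries an induced polarisation of type $(n)$. Let $E_2$ be the abelian subvariety complementary to $E_1'$ in the principally polarised surface $A$ (the same duality computation shows $E_2=\ker\phi_{1*}$, which equals $\widehat{A/E_1'}$ and is connected precisely because $\phi_1$ is optimal). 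The complementary‑subvariety formalism then gives: $E_2$ also has induced polarisation of type $(n)$; the intersection $E_1'\cap E_2$ is finite and is identified both with $\ker\lambda_{E_1'}$ and with $\ker\lambda_{E_2}$; and $E_1'+E_2=A$. Since $n$ is coprime to $\ch(K)$, these kernels are $\cong E_1[n]\cong E_2[n]\cong(\ZZ/n)^{2}$, a group of order $n^{2}$.

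Then I would construct $\phi_2$. Let $\varepsilon\colon C\inj A$ be an Abel--Jacobi embedding attached to a \W point (defined over $K$ when $n$ is odd, and otherwise over a quadratic extension --- this is the meaning of ``after extending the base field''). Define $\phi_2\colon C\to E_2$ as $\varepsilon$ followed by the surjection $A=\widehat A\to\widehat{E_2}\cong E_2$ dual to the inclusion $E_2\inj A$. It is non‑constant, since $\varepsilon(C)$ generates $A$, hence a finite surjective morphism of smooth curves. By contravariant functoriality, biduality, and the fact that $\varepsilon^{*}$ is an isomorphism on $\Pic^{0}$, the homomorphism $\phi_2^{*}$ is again a closed immersion with image $E_2$; in particular $\phi_2$ is optimal. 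Consequently $[\deg\phi_2]=\phi_{2*}\comp\phi_2^{*}$ equals the induced polarisation on $E_2$, which has type $(n)$, so $\deg\phi_2=n$. This proves (1), and it also shows $\deg\phi_2$ is coprime to $\ch(K)$, whence $\phi_2$ is separable and is indeed a covering.

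Finally, set $\varphi:=\phi_1^{*}+\phi_2^{*}\colon E_1\times E_2\to A$, which is (2) by construction. It is surjective because its image is $E_1'+E_2=A$. Since $\phi_1^{*}$ and $\phi_2^{*}$ are closed immersions with images $E_1'$ and $E_2$, the assignment $(x,y)\mapsto\phi_1^{*}(x)$ identifies $\ker\varphi\xrightarrow{\sim}E_1'\cap E_2$; by the previous paragraph this group has order $n^{2}$, giving $\deg\varphi=n^{2}$, which is (3), and $\ker\varphi\cong E_1'\cap E_2\cong E_1[n]\cong E_2[n]$, which is (4). The step I expect to be the main obstacle is the identification of the induced polarisation on $E_1'$ as being exactly of type $(n)$ (equivalently, that $E_1'$ meets the theta divisor in $n$ points) and the transport of this fact through the complementary‑subvariety correspondence so as to pin down $E_1'\cap E_2\cong E_i[n]$: this is where optimality of $\phi_1$ and the hypothesis $\gcd(n,\ch K)=1$ are genuinely used, and it is what forces simultaneously $\deg\phi_2=n$, $\deg\varphi=n^{2}$, and statement (4). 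Once this is in hand, the construction of $\phi_2$ and the verification of (1)--(4) are essentially bookkeeping with duality.
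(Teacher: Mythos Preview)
Your proposal is correct and follows essentially the same route as the paper: define $E_2=\ker\phi_{1*}$ (connected by optimality), build $\phi_2$ as the Abel--Jacobi embedding followed by the dual of the inclusion $E_2\hookrightarrow\jac(C)$, and set $\varphi=\phi_1^{*}+\phi_2^{*}$. The paper's own proof is a terse sketch that defers the details to Kuhn and the author's thesis; what you have done is spell out those details in the language of complementary abelian subvarieties and induced polarisations, which is precisely the content of those references, so there is no genuine divergence of method.
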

\begin{proof}
The covering map $\map1$ induces an embedding $\map1^*\colon\E1\inj\jac(C)$, with respect to an isomorphism $\E1\cong\jac(\E1)$. The elliptic curve $\E2$ is given as $\ker(\mappush1)\subset\jac(C)$, which is connected because $\map1$ is optimal. Let $\varepsilon\colon C\inj\jac(C)$ be an embedding, not necessarily defined over $K$. Recalling that Jacobians are (canonically) self-dual, let $\eta\colon\jac(C)\to\E2$ denote the map dual to the inclusion $\E2\inj\jac(C)$. The covering map $\map2\colon C\to\E2$ is then obtained as the composition $\eta\comp\varepsilon$. The isogeny \mbox{$\isog\colon\E1\times\E2\to\jac(C)$} is given by $\isog=\map1^*+\map2^*$ and its kernel is the image of~$\Ei[n]$ under the embedding $\mapi^*\colon\Ei\inj\jac(C)$ for both $i\in\{1,2\}$. For more details, see Lemma in~\S2 of \cite{kuhn} or Lemma~1.6 in~\cite{thesis}.
\end{proof}
\begin{dfn}
With the assumptions of Lemma~\ref{lemma:main_lemma}, the Jacobian $\jac(C)$ is said to be \emph{$(\degree,\degree)$-split} and the curves~$\E1$ and~$\E2$ are said to be \emph{glued along their $\degree$-torsion}.
\end{dfn}
\begin{rem}
The constructions in Lemma~\ref{lemma:main_lemma} depend on the choice of the embedding $\varepsilon\colon C\inj\jac(C)$, which need not be $K$-rational.
\end{rem}

\subsection{Optimal coverings of odd degree}
Let $\map1\colon C\to\E1$ be an optimal covering of odd degree $\degree$. Let~$\proji\colon\Ei\to\PP^1$ be the canonical maps, let~$T_j$ be the geometric ramification points of~$\proj1$ and let $t_j=\proj1(T_j)$. It follows from Lemma~\ref{lemma:kuhn} that there is a unique ramification point of $\proj1$, say~$T_4$, such that exactly three of the~$W_i$ map to it under $\map1$. Moreover, there is exactly one~$W_i$ above each point in $\{T_1,T_2,T_3\}$. We index the points so that $W_1,W_2,W_3$ lie above~$T_4$. It follows that the divisors $W_1+W_2+W_3$ and $W_4+W_5+W_6$ are~$K$-rational and that the point $T_4$ is $K$-rational. Analogous statements hold for the points~$w_i$ and~$t_j$. Thus we conclude that the curve~$C$ admits an affine plane model~\mbox{$y^2=P(x)Q(x)$,} where~\mbox{$P(x),Q(x)\in K[x]$} are cubics whose roots are~\mbox{$\{w_1,w_2,w_3\}$} and~\mbox{$\{w_4,w_5,w_6\}$}, respectively. Since the class of the canonical divisor~\mbox{$K_C\sim 2W_i$} is~\mbox{$K$-rational}, so is the class of the divisor~\mbox{$W_1-W_2+W_3$}. Since the latter is a divisor of degree one and invariant under the action of the hyperelliptic involution, it follows that~$\map1$ induces a canonical \mbox{$K$-rational} embedding~\mbox{$C\inj\jac(C)$}, given by
\begin{equation}
\label{eq:embedding}
 P\mapsto[P-W_1+W_2-W_3],
\end{equation}
 which is compatible with the isomorphism~$\E1\cong\jac(\E1)$, given by~\mbox{$P\mapsto[P-T_4]$}, and the involutions. We summarize with the following (see \S4 in \cite{kuhn} for details).

\begin{lemma}
There is a canonical and $K$-rational choice for $E_2$ and the associated morphisms.
\end{lemma}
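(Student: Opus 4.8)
The plan is to trace through the construction of $\E2$, $\map2$, the dual map $\eta$ and the isogeny $\isog$ given in the proof of Lemma~\ref{lemma:main_lemma} and to verify that, when $\degree$ is odd, the caveat ``possibly after extending the base field'' may simply be dropped: each of these objects is manufactured from $\map1$ by a canonical recipe, and every step of that recipe stays within the category of $K$-varieties. The only ingredient of Lemma~\ref{lemma:main_lemma} that was not a priori $K$-rational is the Abel--Jacobi embedding $\varepsilon\colon C\inj\jac(C)$, so the crux is to check that the embedding~\eqref{eq:embedding} is simultaneously canonical and defined over~$K$.

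The first step is to rewrite the divisor class occurring in~\eqref{eq:embedding}. Since $W_2$ is a \W point we have $K_C\sim 2W_2$, whence
\[
[W_1-W_2+W_3]=[(W_1+W_2+W_3)-K_C].
\]
The right-hand side depends only on the divisor $W_1+W_2+W_3$, shown above to be $K$-rational, together with the canonical class; in particular it is unchanged if the three \W points lying over $T_4$ are relabelled. Hence~\eqref{eq:embedding} defines a canonical $K$-morphism $\varepsilon\colon C\inj\jac(C)$, compatible with the isomorphism $\E1\cong\jac(\E1)$, $P\mapsto[P-T_4]$, exactly as recorded above.

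The second step is to feed this $\varepsilon$ into the proof of Lemma~\ref{lemma:main_lemma}. As $\map1$ is a $K$-morphism and $T_4\in\E1(K)$, both $\map1^*\colon\E1\inj\jac(C)$ and $\mappush1\colon\jac(C)\to\E1$ are defined over~$K$; therefore $\ker(\mappush1)$ is a $K$-subgroup scheme of $\jac(C)$, and since $\map1$ is optimal it is connected, so $\E2\colonequals\ker(\mappush1)$ is an elliptic curve over~$K$ with origin $\zero2$ inherited from $\jac(C)$. The inclusion $\E2\inj\jac(C)$ is a $K$-morphism; dualizing it and composing with the canonical principal polarization $\jac(C)\xrightarrow{\ \sim\ }\jac(C)^\vee$ and with the canonical autoduality $\E2\xrightarrow{\ \sim\ }\E2^\vee$ — both defined over~$K$ — produces $\eta\colon\jac(C)\to\E2$ over~$K$. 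Consequently $\map2=\eta\comp\varepsilon\colon C\to\E2$ and $\isog=\map1^*+\map2^*\colon\E1\times\E2\to\jac(C)$ are defined over~$K$, and because each step was canonical the resulting data $(\E2,\map2,\eta,\isog)$ is canonically attached to~$\map1$.

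The only delicate point is the bookkeeping of fields of definition for the two duality isomorphisms used to assemble $\eta$: one must invoke the standard facts that the canonical principal polarization of a Jacobian (coming from the theta divisor) and the canonical identification of an elliptic curve with its dual, $P\mapsto[P-\zero2]$, are defined over the ground field and not merely over~$\kbar$. Granting these, the construction of Lemma~\ref{lemma:main_lemma} descends to~$K$ verbatim, which is precisely the claim.
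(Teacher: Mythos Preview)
Your proposal is correct and follows essentially the same approach as the paper: the paper's argument (contained in the paragraph immediately preceding the lemma, with a reference to \S4 of \cite{kuhn} for the remaining details) is precisely that $[W_1-W_2+W_3]=[(W_1+W_2+W_3)-K_C]$ is $K$-rational and $\invol$-invariant, which makes the embedding~\eqref{eq:embedding} canonical and defined over~$K$. Your second step, tracing $K$-rationality through the construction of $\E2$, $\eta$, $\map2$, and $\isog$, is the content the paper outsources to Kuhn; you have supplied it explicitly and correctly.
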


\begin{dfn}
\label{def:complement}
We will implicitly assume the embedding~\eqref{eq:embedding} and say that $\E2$, $\map2$, and~$\mapd2$ are \emph{complementary} to $\E1$, $\map1$, and $\mapd1$, respectively.
\end{dfn}

\begin{lemma}
\label{lemma:kuhn_odd}
The roles of the divisors $w_1+w_2+w_3$ and $w_4+w_5+w_6$ are exchanged between the complementary maps~$\mapd1$ and~$\mapd2$, and we have $\mapdpush1(w_1+w_2+w_3)=3\projpush1(\zero1)$ and $\mapdpush2(w_4+w_5+w_6)=3\projpush2(\zero2)$, and hence also ${\mapd1}_*(w_4+w_5+w_6)=\projpush1(\E1[2]\setminus\{\zero1\})$ and $\mapdpush2(w_1+w_2+w_3)=\projpush2(\E2[2]\setminus\{\zero2\})$, where the identity element~$\zero1\in\E1(K)$ is the point~$T_4$.
\end{lemma}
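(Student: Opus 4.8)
The plan is to move everything into the group $\jac(C)[2]$ by means of the $K$-rational embedding \eqref{eq:embedding}, $\varepsilon\colon P\mapsto[P-W_1+W_2-W_3]$. The statements about $\mapd1$ are essentially the indexing convention of Section~\ref{section:coverings}: $\map1$ sends $W_1,W_2,W_3$ to $\zero1=T_4$ and carries $W_4,W_5,W_6$ bijectively onto $\E1[2]\setminus\{\zero1\}$, so applying $\projpush1$ and recalling $t_j=\proj1(T_j)$ gives $\mapdpush1(w_1+w_2+w_3)=3\projpush1(\zero1)$ and $\mapdpush1(w_4+w_5+w_6)=\projpush1(\E1[2]\setminus\{\zero1\})$. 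The real content is to locate the points $\map2(W_i)$. By the construction in Lemma~\ref{lemma:main_lemma} together with the $K$-rational normalisation of Definition~\ref{def:complement} one has $\map i(W)=\mappush i(\varepsilon(W))$ for every \W point $W$ (after identifying $\Ei=\jac(\Ei)$ via $[Q-\zeroi]\mapsto Q$), and since $\varepsilon$ carries a \W point to a class killed by doubling, each $\varepsilon(W_i)\in\jac(C)[2]$. A short computation with $2W_k\sim K_C$ then shows that $\varepsilon(W_1),\varepsilon(W_2),\varepsilon(W_3)$ are the three classes of the form $[W_a-W_b]$ supported on $\{W_1,W_2,W_3\}$ — distinct, nonzero, with vanishing sum — and likewise $\varepsilon(W_4),\varepsilon(W_5),\varepsilon(W_6)$ are the three such classes supported on $\{W_4,W_5,W_6\}$.

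Next I would record the $2$-torsion linear algebra of the isogeny $\isog=\map1^*+\map2^*$. By Lemma~\ref{lemma:main_lemma} its degree is $n^2$, which is odd, so $\ker\isog$ has odd order and $\isog$ restricts to an isomorphism on $2$-torsion; hence the internal direct sum $\jac(C)[2]=\map1^*(\E1[2])\oplus\map2^*(\E2[2])$. Combined with $\mappush1\comp\map1^*=\mappush2\comp\map2^*=[n]$ (the identity on $2$-torsion, $n$ being odd) and the complementarity relations $\mappush1\comp\map2^*=0$ (since $\map2^*$ factors through $\E2=\ker\mappush1$) and $\mappush2\comp\map1^*=0$ (dually), this yields
\[
\ker\bigl(\mappush1|_{\jac(C)[2]}\bigr)=\map2^*(\E2[2]),\qquad \ker\bigl(\mappush2|_{\jac(C)[2]}\bigr)=\map1^*(\E1[2]),
\]
with $\mappush1$, resp.\ $\mappush2$, restricting to an isomorphism $\map1^*(\E1[2])\to\E1[2]$, resp.\ $\map2^*(\E2[2])\to\E2[2]$. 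Because $\map1(W_i)=\zero1$ for $i\le3$, i.e.\ $\varepsilon(W_i)\in\ker\mappush1$, the three distinct nonzero classes $\varepsilon(W_1),\varepsilon(W_2),\varepsilon(W_3)$ must exhaust $\map2^*(\E2[2])\setminus\{0\}$.

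The crux is to prove $\varepsilon(W_4),\varepsilon(W_5),\varepsilon(W_6)\in\map1^*(\E1[2])$; granting this, $\map2(W_i)=\mappush2(\varepsilon(W_i))=\zero2$ for $i\in\{4,5,6\}$, which is the asserted exchange of the two cubics. I would obtain it by computing the fibres of $\map1$ over the ramification points $T_j$ of $\proj1$ via Lemma~\ref{lemma:kuhn}: for $j\le3$ the divisor $\mapd1^{*}(t_j)$ contains $w_{3+j}$ with odd multiplicity and every other point with even multiplicity, while $\mapd1^{*}(t_4)$ contains $w_1,w_2,w_3$ with odd multiplicity and the rest with even. Pulling back along $\pi_C$, which doubles each $W_i$ and splits the remaining points into $\invol$-pairs, one reads off $\map1^{*}(T_j)\equiv W_{3+j}$ and $\map1^{*}(T_4)\equiv W_1+W_2+W_3$ modulo $2\Div(C)$ and divisors pulled back from $\PP^1$; reducing the degree-zero divisor $\map1^{*}(T_j)-\map1^{*}(T_4)$ in $\jac(C)[2]$ — absorbing the pulled-back part via $K_C\sim 2W_1$ — identifies its class as $[W_{3+j}-W_1+W_2-W_3]=\varepsilon(W_{3+j})$. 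Hence $\map1^*(\E1[2])=\{0,\varepsilon(W_4),\varepsilon(W_5),\varepsilon(W_6)\}$. (Alternatively one avoids the fibre computation: $\isog$ pulls the principal polarisation of $\jac(C)$ back to an odd multiple of the product polarisation, so $\map1^*(\E1[2])$ and $\map2^*(\E2[2])$ are each other's orthogonal complements for the Weil pairing, and it suffices to check that each $\varepsilon(W_i)$ with $i\ge4$ pairs trivially with $\varepsilon(W_1),\varepsilon(W_2),\varepsilon(W_3)$ — a direct calculation with the classes $[W_a-W_b]$.)

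It then remains to collect the conclusions. We have shown $\map2$ collapses $W_4,W_5,W_6$ to $\zero2$, giving $\mapdpush2(w_4+w_5+w_6)=3\projpush2(\zero2)$; and the three nonzero classes $\varepsilon(W_1),\varepsilon(W_2),\varepsilon(W_3)\in\map2^*(\E2[2])$, on which $\mappush2$ is injective, map to the three distinct non-trivial $2$-torsion points of $\E2$, giving $\mapdpush2(w_1+w_2+w_3)=\projpush2(\E2[2]\setminus\{\zero2\})$. Thus the roles of $w_1+w_2+w_3$ and $w_4+w_5+w_6$ are exchanged between $\mapd1$ and $\mapd2$, since $\mapd1$ collapses the former. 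The main obstacle is exactly the crux step — showing that the complementary covering uses the \emph{complementary} triple of \W points; both routes to it require genuine input (the fibre route the parity analysis of Lemma~\ref{lemma:kuhn} pushed through the two quotient maps, the pairing route the compatibility of the Weil pairing with $\isog$ and the two principal polarisations), whereas everything else is bookkeeping in $\jac(C)[2]$.
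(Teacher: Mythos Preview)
Your argument is correct. The paper itself does not prove this lemma: it is stated without proof at the end of the subsection on odd-degree coverings, which is presented as a review of known results (with a blanket reference to \S4 of \cite{kuhn} a few lines earlier). So there is no ``paper's own proof'' to compare against, and your write-up in fact supplies what the paper omits.

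A few remarks on the details. The identification of $\varepsilon(W_i)$ for $i\le 3$ with the three nonzero classes $[W_a-W_b]$ supported on $\{W_1,W_2,W_3\}$ is immediate from $2W_k\sim K_C$; for $i\ge 4$ one also uses $W_1+W_2+W_3\sim W_4+W_5+W_6$ (which follows from $\sum_{k=1}^6 W_k\sim 3K_C$) to rewrite $\varepsilon(W_{3+j})$ as $[W_{3+k}-W_{3+l}]$. Your fibre route is sound: since $\iota$ fixes each fibre $\map1^{*}(T_j)$ and the only \W point in that fibre is $W_{3+j}$ (for $j\le 3$), the remainder is a sum of $\iota$-pairs, each $\sim K_C$; combined with the odd ramification index of $W_{3+j}$ this pins down $[\map1^{*}(T_j)-\map1^{*}(T_4)]$ as $\varepsilon(W_{3+j})$ exactly as you say. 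The orthogonality alternative via the Weil pairing is also valid, and in some sense closer in spirit to the later use the paper makes of Corollary~\ref{cor:mumford}. Either way, the crux you isolate --- that the complementary covering uses the complementary triple of \W points --- is indeed the only substantive step, and both of your routes handle it.
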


\section{Covering maps of degree three}
We will now focus on the case~$\degree=3$. We therefore assume that $\ch(K)\not\in\{2,3\}$. It follows from Lemma~\ref{lemma:kuhn} and its preceding paragraph that the ramification of the map $\mapd1\colon C\to\E1$ is restricted to one of two possibilities. This ultimately leads to a parametrization of the $\kbar$-isomorphism invariants of the curves involved. We will deal with the generic case first.

\subsection{The generic case}
\label{sub:generic}
The results in this subsection appear in~\cite{kuhn}, but here we go through the derivation in full detail. Let \mbox{$\map1\colon C\to\E1$} be an optimal covering of degree~$3$. In the generic case, the map $\mapd1$ is doubly ramified at a $K$-rational point that is not in any of the fibres $\mapd1^*(t_j)$. Let us denote the image of this point under $\mapd1$ by~$t_0$. Since~$t_0$ and~$t_4$ are $K$-rational and we are interested in isomorphism classes, we may and do assume that $t_0=0$, $t_4=\infty$, and $\mapd1^{*}(0)=2\cdot0+\infty$. In summary, we assume that the ramification of~$\mapd1$ is as depicted in Fig.~\ref{fig:fig1}, where the unramified points above $t_1,\dots,t_4$ are the~$w_i$ (depicted are the ramification indices of the points in the fibres of $\mapd1$ above the $t_j$).
\begin{figure}[H]%
\hspace{-2.35em}
\centering
\parbox{1em}{
\raisebox{8.4em}{
\begin{tabular}{c}
$\infty$ \\
\raisebox{-0.7em}{$0$}
\end{tabular}
}
}
\;\;
\parbox{\picsize}{%
\centering
\includegraphics[width=\picsize]{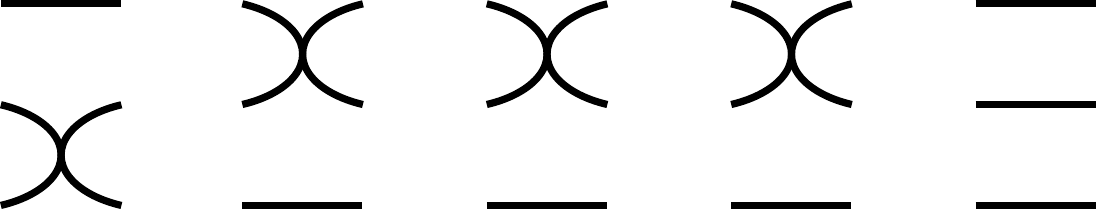}\\

\tikzmark{tikzmark1}{}\\
\hspace{0.6cm}$\mapd1$\\
\tikzmark{tikzmark2}{}\\
\includegraphics[width=\picsize]{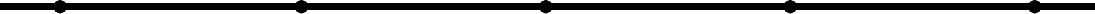}\\
\hspace{.05em} $0$ \hspace{1.41em} $t_1$ \hspace{1.5em}  $t_2$ \hspace{1.5em}  $t_3$ \hspace{1.5em}  $\infty$
}\DrawArrow[->]{tikzmark1}{tikzmark2}
\caption{Ramification of the map $\mapd1$ in the generic case.}
\label{fig:fig1}
\end{figure}
\par
\noindent
In other words, we assume, without loss of generality, that
$$
\mapd1(x)=\dfrac{x^2}{P(x)},
$$
where $P(x)=x^3+ax^2+bx+c\in K[x]$ has roots $w_1,w_2,w_3\in\kbar$. The~$w_i$ are pairwise distinct and none of them equals zero. This can be expressed as
\begin{align*}
\res_x(x,P(x))&=c\neq0,\\
\disc_x(P(x))&=a^2 b^2 - 4 b^3 - 4 a^3 c + 18 a b c - 27 c^2\neq0.
\end{align*}
The pullback of $t_1+t_2+t_3$ corresponds to the roots of $D(x)^2Q(x)$, where~$D(x)$ and~$Q(x)$ are cubics in $K[x]$. Moreover, the roots of~$D(x)$ are the ramification points distinct from~$0$, and the roots of~$Q(x)$ are $w_4,w_5,w_6$. Since
$$
\frac{\d\mapd1}{\d x}(x)=-\frac{x(x^3- b x -2 c )}{P(x)^2}
$$
and the roots of the numerator correspond precisely to the doubly ramified points of~$\mapd1$, we can take $D(x)=x^3- b x -2 c$. The ramification points are again pairwise distinct so we have
\begin{equation}
\disc_x(D(x))=4 (b^3 - 27 c^2)\neq0.
\end{equation}
From this we can calculate the nonic $D(x)^2Q(x)$ whose roots correspond to the divisor $\mapd1^*\comp{\mapd1}_*(d_1+d_2+d_3)$, where the $d_i$ are the roots of $D(x)$. In particular, we have the following equality, up to multiplication by constants,
$$
\res_y(x^2P(y)-y^2P(x),D(y))=D(x)^2Q(x).
$$
This resultant is easily found to equal
$$c (x^3- b x -2 c )^2 (4 c x^3+ b^2 x^2 +  2 b c x + c^2),
$$
whence we can take $Q(x)=4 c x^3+ b^2 x^2 +  2 b c x + c^2$. It follows that, up to quadratic twists, the curve~$C$ admits an affine plane model given by
\begin{equation}
\label{eq:generic_C}
y^2=P(x)Q(x)=(x^3+ax^2+bx+c)(4 c x^3+ b^2 x^2 + 2 b c x + c^2).
\end{equation}
By Lemma~\ref{lemma:kuhn_odd}, we may assume
$$
\mapd2(x)=\frac{(x+d)^2(x+e)}{4 c x^3+ b^2 x^2 + 2 b c x + c^2}
$$
for some $d,e\in K$, up to multiplication by a constant. To determine $d$ and $e$, we apply the procedure used to obtain $Q(x)$ from~$\mapd1$ to the map~$\mapd2$. In doing so, we must ultimately obtain a cubic polynomial~$R(x)$ that is a multiple of~$P(x)$, by Lemma~\ref{lemma:kuhn_odd}. Working over the field $K(a,b,c,d,e)$, we can compute the polynomial~$R(x)$ and perform Euclidean division on~$P(x)$ and~$R(x)$. By the argument above, the remainder must equal zero and we obtain three polynomial equations over the ring~$K[a,b,c,d,e]$. More details can be found in~\cite{thesis}, where the computations are performed over $K(a,b,c)[d,e]$. The argument here is only slightly different. Let~$I\subset K[z,a,b,c,d,e]$ denote the ideal generated by
$$
1-z\cdot(d-e)\cdot P(0)\cdot Q(-d)\cdot Q(-e)\cdot\disc_x(P(x))\cdot\disc_x(Q(x))
$$
and the coefficients of the remainder obtained by dividing~$R(x)$ and~$P(x)$. Eliminating the variable $z$ and computing the primary decomposition of the corresponding elimination ideal gives, among others, the following two equations:
\begin{align*}
& bd - 3c = 0,\\
& acd - 4ace + b^2e - bc + 3cde = 0.
\end{align*}
We therefore take
\begin{equation}
\mapd2(x)=\frac{(bx+3c)^2((b^3 - 4abc + 9c^2)x + b^2c - 3ac^2)}{4 c x^3+ b^2 x^2 + 2 b c x + c^2},
\label{eq:d_and_e}
\end{equation}
which also covers the cases when $\infty$ is a zero of $f_2$. Now one can determine the modular invariants of~$\E1$ and~$\E2$. An affine plane model for $\E1$ can be determined, up to quadratic twists, by requiring that the set of branch points of the canonical map $\proj1$ is $\{t_1,t_2,t_3,\infty\}$, i.e.~$\infty$ and the image under~$\mapd1$ of the three roots of~$Q(x)$. Likewise, an affine plane model for~$\E2$ can be determined by requiring that~$\proj2$ ramifies above~$\infty$ and the image under $\mapd2$ of the three roots of~$P(x)$. The corresponding cubics can be obtained from the resultants \mbox{$\res_y(xP(y)-y^2,Q(y))$} and \mbox{$\res_y(xQ(y)-(y+d)^2(y+e),P(y))$} and are omitted here. The $j$-invariants of the two elliptic curves can then be obtained from the cubics by a direct computation. We obtain the following two expressions, which appear in~\S6 of~\cite{kuhn}:
\begin{equation*}
j(\E1)=\frac{16 (a^2 b^4 + 12 b^5 - 126 a b^3 c + 216 a^2 b c^2 + 405 b^2 c^2 - 972 a c^3)^3}{(b^3 - 27 c^2)^3 (a^2 b^2 - 4 b^3 - 4 a^3 c + 18 a b c - 27 c^2)^2},
\label{jinv33}
\end{equation*}
\begin{equation*}
j(\E2)=\frac{256 (a^2 - 3 b)^3}{a^2 b^2 - 4 b^3 - 4 a^3 c + 18 a b c - 27 c^2}.
\end{equation*}

\subsection{The special cases}
In this subsection we will deal with the cases in which one or both of the maps $\mapdi$ are special, i.e. there is a triple ramification point above the branch locus of $\proji$.

\subsubsection{First map special}
\label{subsub:first_special}
Suppose that $\mapd1$ is special and $\mapd2$ is not. By passing to a quadratic extension of~$K$ if necessary and applying a suitable automorphism of~$\PP^1$, we may and do assume that $\mapd1$ has a triple zero at 0 and a simple pole at~$\infty$, so that
$$
\mapd1(x)=\frac{x^3}{x^2+ax+b},\quad\mapd2(x)=\frac{(x+c)^2(x+d)}{Q(x)},
$$
The argument used in the previous subsection gives $Q(x)=(a^2 - 4 b) x^3 - 2 a b x^2 - 3 b^2 x$ and therefore, up to quadratic twists,~$C$ admits an affine plane model given by
$$
y^2=x((a^2 - 4 b) x^2 - 2 a b x - 3 b^2)(x^2+ax+b).
$$
As before, we impose the generic ramification on~$\mapd2$ and use Lemma~\ref{lemma:kuhn_odd}. The difference in this case is that~$R(x)$ is a priori a cubic so we must identify its leading coefficient with zero and then find the remainder by dividing by~$P(x)$. This ultimately gives
\begin{align*}
& a c - 3 b  = 0,\\
& 3 a d - 3 b + c^2 - 4 c d = 0.
\end{align*}

It follows that we can take
\begin{equation*}
\label{eqn:f2_first_special}
\mapd2(x)=\frac{
(a x + 3 b)^2 (a (a^2 - 4 b) x + b (a^2 - 3 b))
}{
x ((a^2 - 4 b) x^2 - 2 a b x - 3 b^2)
}.
\end{equation*}
Simple resultant computations finally give models of~$\E1$ and~$\E2$ from which we obtain
$$
j(\E1)=\frac{16 (16 a^6 - 144 a^4 b + 405 a^2 b^2 - 324 b^3)^3}{729 b^4 (a^2 - 3 b)^3 (a^2 - 4 b)^2},
\quad
j(\E2)=\frac{256 (a^2 - 3 b)^3}{b^2 (a^2 - 4 b)}.
$$
A simpler parametrization can be obtained by reversing the roles of the maps, as below.

\subsubsection{Second map special}
\label{subsub:second_special}
Suppose that $\mapd2$ is special and $\mapd1$ is not. This is analogous to the general case and we may and do assume that
$$
\mapd1(x)=\frac{x^2}{x^3 + ax^2 + bx + c},\quad\mapd2(x)=\frac{(x+d)^3}{4cx^3 + b^2x^2 + 2bcx + c^2}.
$$
Following the same argument, we obtain:
\begin{align*}
& b d - 3 c = 0,\\
& -2 b + 3 a d - 3 d^2 = 0,
\end{align*}
whence we can take
$$
\mapd1(x) = \frac{x^2}
{(b x + 3 c)(9 c x^2+ 2 b^2 x + 3 b c)},
\quad
\mapd2(x) = \frac{(b x+3c)^3}
{4 c x^3 + b^2 x^2 + 2 b c x + c^2}.
$$
Therefore a quadratic twist of $C$ admits an affine plane model given by
$$
y^2 = (b x + 3 c)(9 c x^2+ 2 b^2 x + 3 b c)(4 c x^3 + b^2 x^2 + 2 b c x + c^2).
$$
The $j$-invariants of $\E1$ and $\E2$ are then easily found to be
\begin{equation}
\label{eqn:second_special_j}
j(\E1) = \frac{64 b^3}{c^2},
\quad
j(\E2) = \frac{64(4b^3 - 27c^2)^3}{729 b^3 c^4}.
\end{equation}
Note that in all cases one must assume that the numerator and the denominator of~$\mapdi$ have no common factors with one another. These conditions can be expressed as the non-vanishing of the corresponding resultants, omitted here.

\subsubsection{Both maps special}
\label{subsub:both_special}
Finally, suppose that both $\mapd1$ and $\mapd2$ are special. Without loss of generality, we assume
$$
\mapd1(x)=\frac{x^3}{x^2+ax+b},
\quad
\mapd2(x)=\frac{1}{Q(x)},
$$
where $b\neq 0$ and $a^2-4b\neq 0$. Applying the usual argument to~$\mapd1$, we get
\begin{equation*}
Q(x)=(a^2-4b)x^3-2abx^2-3b^2x.
\end{equation*}
Applying the same argument to $\mapd2$, we conclude that $x^2+ax+b$ must divide
$$
3(a^2 - 4 b)^2 x^2 - 4ab(a^2 - 4 b) x-16  b^2(a^2 - 3 b)
$$
Dividing the latter by the former gives the remainder
$$
-a(3a^2-8b)(a^2-4b)x-a^2b(3a^2-8b).
$$
Given that $a^2-4b\neq 0$ and $b\neq 0$, the remainder is identically zero if and only if~$a=0$ or $b=3a^2/8$. For $a=0$, we obtain
$$
\mapd1(x)=\frac{x^3}{x^2+b},\quad \mapd2(x)=\frac{1}{4x^3+3bx},\quad j(\E1)=j(\E2)=1728.
$$
For $b=3a^2/8$, we obtain
$$
\mapd1(x)=\frac{x^3}{8x^2+8ax+3a^2},
\quad
\mapd2(x)=\frac{1}{32x^3+48ax^2+27a^2x},
$$
$$
j(\E1)=j(\E2)=-\frac{873722816}{59049}=-\frac{2^6\cdot 239^3}{3^{10}}.
$$
\par
This shows that there are exactly two pairs of isomorphism classes of $\E1$ and $\E2$ such that two covering maps $C\to\Ei$ of degree three have a triple ramification point and such that the fibre with three \W points of one covering has no points in common with the fibre with three \W points of the other covering. However, only the case $j(E_1)=j(E_2)=1728$ corresponds to a pair of complementary curves and coverings in the sense of Definition~\ref{def:complement}. In the other case we have $E_1=E_2$ and one special covering is obtained from the other one by composing with an automorphism of $C$. A complementary pair of curves with~$j(E_1)=-873722816/59049$ and a special covering $C\to E_1$ is obtained by putting $a=3a^2/8$ in~\S\ref{subsub:first_special}, which yields a generic covering $C\to E_2$ with~\mbox{$j(E_2)=64/9$.} This is elaborated upon in the Appendix (Example~\ref{example:bad_conclusion}).\par
Therefore there is a unique pair of isomorphism classes with two complementary special coverings of degree three. There have been conflicting claims in the literature about the number of such pairs (cf.~\cite{kuhn}, \cite{shaska}) and we hope that this detailed exposition settles the matter -- the statement in~\S6 of~\cite{kuhn} is correct.
\begin{remark}
Suppose that a curve of genus two covers an elliptic curve of \mbox{$j$-invariant~$J_1$} ``generically'' $3$-to-$1$ and that it covers the complementary curve, of \mbox{$j$-invariant~$J_2$,} ``specially'' $3$-to-$1$. Then it is readily verified, using the parametrizations above, that $F(J_1,J_2)=0$, where
\begin{equation}
\label{eq:param_curve}
F(X,Y):= X^3 - 1296 X^2 - 729 X Y + 559872 X - 80621568.
\end{equation}
The \mbox{$j$-invariant} pairs that are given in~\cite{shaska} are then obtained as the intersection of the curves~ \mbox{$F(X,Y)=0$} and \mbox{$F(Y,X)=0$.} However, the points of the intersection do not correspond to the \mbox{$j$-invariant} pairs of curves whose $3$-to-$1$ coverings by a genus-$2$ curve are both special since~\eqref{eq:param_curve} is obtained under the assumption that one of the coverings is generic. For example, the point $(1728,1728)$ lies on both curves. To see that it is on the first curve, we can take $(b,c)=(3t^2,- t^3)$ in~\eqref{eqn:second_special_j}, for $t\neq 0$. However, these parameters do not define a hyperelliptic curve because $t$ is a multiple root of both~$P(x)$ and~$Q(x)$.
\end{remark}

\begin{remark}
Our choices of ramification points and parametrizations are the same as those made by Kuhn~\cite{kuhn}; this provides some context for the analysis of the special cases. However, in terms of elegance and simplicity, the parametrization given in~\cite{howe} is superior. This is particularly evident when it comes to explicitly writing down the elliptic curves $E_i$ and the degree-$3$ morphisms $C\to E_i$ (see~\S\ref{app:explicit_covering} in the Appendix).
\end{remark}

\section{Gluing two elliptic curves}
In the previous section we started with an optimal covering $C\to \E1$ of degree~$3$ and constructed the complementary curve~$\E2$. In the remaining sections we adopt a different approach. We start with two elliptic curves~$\E1,\E2$ and construct a curve of genus two whose Jacobian is isogenous to \mbox{$\E1\times\E2$} via an isogeny whose kernel is prescribed. This approach can be found in~\cite{freykani}. First we recall some definitions and classical results.
\par
Let $A$ be an abelian variety over $K$ and let $\lambda\colon A\to\chk{A}$ be a polarization. Suppose that $m\in\ZZ$ is coprime to $\ch(K)$ and such that $\ker(\lambda)\subset A[m]$. Let $$e_m\colon A[m](\kbar)\times\chk{A}[m](\kbar)\to\mu_m$$ denote the Weil pairing. Then we can associate to $\lambda$ a skew-symmetric pairing 
$$
e_\lambda\colon\ker(\lambda)\times\ker(\lambda)\to\mu_m
$$
that is defined for any pair $(P,Q)$ of geometric points as $e_\lambda(P,Q) = e_m( P, \lambda(R))$, where $R$ is such that $[m]R=Q$. This does not depend on $R$ or $m$ (see~\S16 in \cite{milneav}).
\par
\begin{lemma}
\label{lemma:mumford}
Let $\varphi\colon A\to B$ be an isogeny whose degree is coprime to $\ch(K)$ and let $\lambda\colon A\to\chk{A}$ be a polarization induced by a line bundle~$\LL$. Then the following are equivalent:
\begin{enumerate}
\item There exists a line bundle $\MM$ on $B$ such that $\LL=\varphi^*(\MM)$, inducing a polarization $\lambda'\colon B\to\chk{B}$,
\item $\ker(\varphi)\subset\ker(\lambda)$ and $e_\lambda$ is trivial on~$\ker(\varphi)\times\ker(\varphi)$.
\end{enumerate}
\end{lemma}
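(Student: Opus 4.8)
The plan is to prove the two implications separately, using throughout: the functoriality identity $\phi_{\varphi^*\MM}=\chk{\varphi}\circ\phi_{\MM}\circ\varphi$, where $\phi_{\MM}\colon B\to\chk{B}$ denotes the homomorphism attached to a line bundle $\MM$ (so that $\lambda=\phi_{\LL}$); the exactness of the dual sequence $0\to\chk{B}\xrightarrow{\chk{\varphi}}\chk{A}\to\chk{N}\to0$, where $N:=\ker(\varphi)$ and $\chk{N}$ is its Cartier dual; and the compatibility of the Weil pairing with an isogeny and its dual, $e_m(\varphi(P),y)=e_m(P,\chk{\varphi}(y))$.

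For $(1)\Rightarrow(2)$ the argument is a short computation. If $\LL=\varphi^*\MM$, then $\lambda=\chk{\varphi}\circ\lambda'\circ\varphi$ with $\lambda'=\phi_{\MM}$, so for $x\in N$ we get $\lambda(x)=\chk{\varphi}(\lambda'(\varphi(x)))=O$, that is, $N\subseteq\ker(\lambda)$. For $P,Q\in N$ choose $R$ with $[m]R=Q$; then, by the definition of $e_\lambda$ and the pairing compatibility, $e_\lambda(P,Q)=e_m(P,\lambda(R))=e_m(P,\chk{\varphi}(\lambda'(\varphi(R))))=e_m(\varphi(P),\lambda'(\varphi(R)))=1$, because $\varphi(P)=O$. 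Hence $e_\lambda$ is trivial on $N\times N$.

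For $(2)\Rightarrow(1)$ I would first descend the polarization $\lambda$ as a homomorphism and then correct by a line bundle of degree zero. Since $N\subseteq\ker(\lambda)$, there is a unique $\psi\colon B\to\chk{A}$ with $\lambda=\psi\circ\varphi$; dualizing, and using that a polarization is symmetric, gives $\lambda=\chk{\varphi}\circ\chk{\psi}$ after identifying $A$ with $\chk{\chk{A}}$. Because $N\subseteq\ker(\lambda)$ we have $\chk{\varphi}(\chk{\psi}(N))=\lambda(N)=O$, so $\chk{\psi}$ induces a homomorphism $\delta\colon N\to\ker(\chk{\varphi})\cong\chk{N}$, and the crucial point is that, under the Weil-pairing realization of this Cartier duality, $\delta$ is precisely the homomorphism $P\mapsto e_\lambda(P,-)$ — this is what the definition of $e_\lambda$ amounts to once it is unwound (cf.\ \cite{milneav}, \S16). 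Granting this, the hypothesis $e_\lambda|_{N\times N}=1$ forces $\delta=0$, hence $N\subseteq\ker(\chk{\psi})$, so $\chk{\psi}$ factors through $\varphi$; dualizing back and cancelling the isogenies $\varphi$ and $\chk{\varphi}$ (which act injectively on $\Hom$ and preserve symmetry) yields $\lambda=\chk{\varphi}\circ\phi_{\MM_0}\circ\varphi=\phi_{\varphi^*\MM_0}$ for some line bundle $\MM_0$ on $B$ — here one uses that in characteristic $\neq2$ every symmetric homomorphism $B\to\chk{B}$ is induced by a line bundle. Finally $\LL\otimes(\varphi^*\MM_0)^{-1}$ lies in $\Pic^0(A)$; since $\chk{\varphi}\colon\Pic^0(B)\to\Pic^0(A)$ is surjective, we can absorb this class into $\MM_0$ to obtain $\MM$ with $\varphi^*\MM=\LL$, and $\MM$ is ample because its pullback along the finite morphism $\varphi$ is, so $\lambda'=\phi_{\MM}$ is a polarization.

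I expect the main obstacle to be the identification $\delta(P)=e_\lambda(P,-)$: this means matching the ``divide by $m$, apply $\lambda$, pair via $e_m$'' recipe for $e_\lambda$ against the Weil-pairing description of the isomorphism $\ker(\chk{\varphi})\cong\chk{N}$ together with the factorization $\lambda=\chk{\varphi}\circ\chk{\psi}$. A secondary subtlety is rationality over $K$: an isogeny need not be surjective on $K$-points, so producing $\MM$ and the descent over $K$ itself rather than over $\kbar$ may require a Galois-descent argument or a finite extension of the base field. Alternatively, one can avoid this bookkeeping altogether and invoke Mumford's descent theorem for the quotient $A\to A/N$ (see \cite{milneav}, or Mumford, \emph{Abelian Varieties}, \S23): the line bundle $\LL$ descends to $B$ precisely when $N$ lifts to a level subgroup of the theta group $\mathcal{G}(\LL)$, and the obstruction to such a lift is exactly the restriction of the commutator pairing of $\mathcal{G}(\LL)$ — which is $e_\lambda$ — to $N\times N$.
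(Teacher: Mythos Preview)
The paper does not actually prove this lemma: its entire proof reads ``See Proposition 16.8 in \cite{milneav} or Theorem 2 and its Corollary in \S23 of \cite{mumford}.'' Your proposal is therefore strictly more than what the paper offers, and in fact it is a faithful unpacking of precisely those two references. The forward implication $(1)\Rightarrow(2)$ is clean and correct as you wrote it. For $(2)\Rightarrow(1)$, your first route---factor $\lambda$ through $\varphi$, dualize using symmetry, and show that the induced map $\delta\colon N\to\chk{N}$ is governed by $e_\lambda$---is exactly Milne's argument in \S16, and you are right to flag the identification $\delta(P)=e_\lambda(P,-)$ as the crux; this is the content of the lemmas leading up to Proposition~16.8 there. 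Your alternative via theta groups and level subgroups is exactly Mumford's approach in \S23. Two minor remarks: the statement that every symmetric homomorphism $B\to\chk{B}$ comes from a line bundle holds in all characteristics over an algebraically closed field (your ``$\ch\neq2$'' is harmless here but unnecessary), and your caveat about rationality over $K$ is well placed---the paper's statement is silent on this, and the references establish the result over~$\kbar$.
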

\begin{proof}
See Proposition 16.8 in \cite{milneav} or Theorem 2 and its Corollary in~\S23 of \cite{mumford}.
\end{proof}
\begin{cor}
\label{cor:mumford}
Let $\map1\colon C\to E_1$ be an optimal covering of an elliptic curve by a curve of genus two, such that $\deg\map1=\degree$ is coprime to $\ch(K)$, and let~$E_2$ be the complementary elliptic curve. Let \mbox{$\isom\colon E_1[\degree]\stackrel{\sim}{\longrightarrow}E_2[\degree]$} be the induced canonical isomorphism (with respect to an embedding of $C$). Then~$\isom$ inverts the Weil pairing, i.e.
\begin{equation}
e_\degree(P, Q)=e_\degree\left(\isom(P),\isom(Q)\right)^{-1}
\label{eq:isotropy_condition}
\end{equation}
for any $P,Q\in E_1[\degree](\kbar)$.
\end{cor}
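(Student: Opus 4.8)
The plan is to derive~\eqref{eq:isotropy_condition} from Lemma~\ref{lemma:mumford}, applied to the isogeny $\isog=\map1^{*}+\map2^{*}\colon\E1\times\E2\to\jac(C)$ of Lemma~\ref{lemma:main_lemma} together with the theta line bundle of~$C$. Write $\lambda_C\colon\jac(C)\to\chk{\jac(C)}$ for the canonical principal polarization of $\jac(C)$, induced by a theta line bundle $\LL_\Theta$; write $\lambda_{\E1},\lambda_{\E2}$ for the canonical principal polarizations of $\E1,\E2$; and put $\lambda_{\E1\times\E2}:=\lambda_{\E1}\times\lambda_{\E2}$ for the product principal polarization of $\E1\times\E2$. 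All degrees in sight ($\deg\isog=\degree^{2}=9$ and $\deg\map1=\deg\map2=3$) are coprime to $\ch(K)$, so Lemma~\ref{lemma:mumford} and the Weil-pairing formalism recalled above are available.

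The first and principal step is to show that $\isog^{*}\LL_\Theta$ induces on $\E1\times\E2$ the polarization $\degree\,\lambda_{\E1\times\E2}$, equivalently that $\chk{\isog}\comp\lambda_C\comp\isog=\degree\,\lambda_{\E1\times\E2}$. Here $\isog|_{\E1}=\map1^{*}$ and $\isog|_{\E2}=\map2^{*}$, and pullback and pushforward along a covering of curves are adjoint for the canonical autodualities of the Jacobians (a standard fact), so that the dual of $\phi_i^{*}$ becomes $\phi_{i*}$; hence the ``$(i,j)$ block'' of $\chk{\isog}\comp\lambda_C\comp\isog$ is $\lambda_{\Ei}\comp\phi_{i*}\comp\phi_j^{*}$. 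For $i=j$ this equals $\lambda_{\Ei}\comp\phi_{i*}\comp\phi_i^{*}=\lambda_{\Ei}\comp[\deg\phi_i]=\degree\,\lambda_{\Ei}$, and for $i\neq j$ it vanishes: $\operatorname{im}(\map2^{*})=\E2=\ker(\mappush1)$, which is \emph{connected} precisely because $\map1$ is optimal, so $\mappush1\comp\map2^{*}=0$, and dually $\mappush2\comp\map1^{*}=0$. Thus $\chk{\isog}\comp\lambda_C\comp\isog$ is block-diagonal with blocks $\degree\,\lambda_{\E1}$ and $\degree\,\lambda_{\E2}$, i.e.\ it equals $\degree\,\lambda_{\E1\times\E2}$. (This is the Frey--Kani picture of a $(\degree,\degree)$-splitting; cf.~\cite{freykani}.)

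Now apply Lemma~\ref{lemma:mumford} to the isogeny $\isog\colon\E1\times\E2\to\jac(C)$, taking $\LL=\isog^{*}\LL_\Theta$ and $\MM=\LL_\Theta$: statement~(1) holds by construction, so statement~(2) holds for the polarization $\lambda$ induced by $\LL$, which by the previous step is $\degree\,\lambda_{\E1\times\E2}$. Hence $\ker(\isog)\subset\ker(\degree\,\lambda_{\E1\times\E2})=(\E1\times\E2)[\degree]$ — consistent with $\ker(\isog)\cong\E1[\degree]$ — and, the essential point, $e_{\degree\,\lambda_{\E1\times\E2}}$ is trivial on $\ker(\isog)\times\ker(\isog)$. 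To interpret this, applying the formula $e_\lambda(P,Q)=e_m(P,\lambda(R))$ (with $[m]R=Q$) with $m=\degree$ and $\lambda=\degree\,\lambda_E$ gives $e_{\degree\,\lambda_E}(P,Q)=e_\degree(P,\lambda_E(Q))$, which is exactly the Weil pairing $e_\degree$ on $E$; and since the Weil pairing is multiplicative over products and $\lambda_{\E1\times\E2}=\lambda_{\E1}\times\lambda_{\E2}$,
\[
e_{\degree\,\lambda_{\E1\times\E2}}\bigl((P_1,P_2),(Q_1,Q_2)\bigr)=e_\degree(P_1,Q_1)\,e_\degree(P_2,Q_2),\qquad P_i,Q_i\in\Ei[\degree].
\]
On the other hand, both coordinate projections $\ker(\isog)\to\Ei$ are injective (e.g.\ the kernel of $\ker(\isog)\to\E1$ is $\{0\}\times\ker(\map2^{*})=0$) with image inside $\Ei[\degree]$ (since $\map1^{*}(P)\in\operatorname{im}(\map2^{*})=\ker(\mappush1)$ forces $\degree\,P=\mappush1(\map1^{*}(P))=0$, and symmetrically), so, as $|\ker(\isog)|=\deg\isog=\degree^{2}=|\Ei[\degree]|$, both are isomorphisms and $\ker(\isog)$ is the graph of $\pm\isom$; thus each element of $\ker(\isog)$ has the form $(P,\pm\isom(P))$ with $P\in\E1[\degree]$, the sign depending on the normalisation of $\isom$. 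Feeding two such elements into the triviality of $e_{\degree\,\lambda_{\E1\times\E2}}$, and using bilinearity of the Weil pairing to discard the sign, gives $1=e_\degree(P,Q)\,e_\degree(\isom(P),\isom(Q))$ for all $P,Q\in\E1[\degree]$, which is precisely~\eqref{eq:isotropy_condition}.

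I expect the main obstacle to be the first step — identifying $\isog^{*}\LL_\Theta$ with $\degree\,\lambda_{\E1\times\E2}$ — since everything afterwards is a formal consequence of Lemma~\ref{lemma:mumford} and the bilinearity/multiplicativity of the Weil pairing. Within that step the two facts doing the work are the adjointness of $\phi_i^{*}$ and $\phi_{i*}$ for the theta polarizations and the connectedness of $\E2=\ker(\mappush1)$ (equivalently, the optimality of $\map1$), which is what kills the off-diagonal blocks $\mappush1\comp\map2^{*}$ and $\mappush2\comp\map1^{*}$.
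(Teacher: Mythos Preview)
Your argument is correct and is precisely the intended derivation: the paper states Corollary~\ref{cor:mumford} without proof, as an immediate consequence of Lemma~\ref{lemma:mumford} applied to the isogeny $\isog=\map1^{*}+\map2^{*}$ of Lemma~\ref{lemma:main_lemma}, and your write-up supplies exactly those details (the identification $\chk{\isog}\comp\lambda_C\comp\isog=\degree\,\lambda_{\E1\times\E2}$, the resulting isotropy of $\ker(\isog)$, and the translation into~\eqref{eq:isotropy_condition} via the graph description of $\ker(\isog)$). There is nothing to add or correct.
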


In other words, Lemma~\ref{lemma:mumford} provides a criterion for deciding when a polarization ``descends'' through an isogeny. In view of Lemma~\ref{lemma:mumford}, our starting data are two elliptic curves $E_1, E_2$ and an isomorphism $\isom\colon~E_1[\degree]\stackrel{\sim}{\longrightarrow}~E_2[\degree]$ that is \emph{anti-symplectic} with respect to the Weil pairing, i.e. $\isom$ satisfies~\eqref{eq:isotropy_condition} for any $P,Q\in E_1[\degree](\kbar)$, where~$\degree$ is coprime to~$\ch(K)$.
\par
We assume the usual principal polarization on $ E_1\times E_2$, given by the divisor
$$
\Theta=\{\zero1\}\times E_2+E_1\times\{\zero2\}.
$$
Let $\Gamma_\isom\subset(E_1\times E_2)[\degree]$ denote the graph of~$\isom$ and let
$$
\isog\colon E_1\times E_2\to(E_1\times E_2)/\Gamma_\isom=:J
$$
be the canonical map, which is clearly an isogeny.
\begin{lemma}
\label{lemma:Jpolarized}
The isogeny $\isog\colon E_1\times E_2\to J$ induces a principal polarization of $J$.
\end{lemma}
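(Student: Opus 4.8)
The plan is to apply Lemma~\ref{lemma:mumford} to the isogeny $\isog\colon E_1\times E_2\to J=(E_1\times E_2)/\Gamma_\isom$, with $A=E_1\times E_2$, $B=J$, and $\LL$ the line bundle $\LL(\Theta)$ inducing the principal polarization $\lambda_\Theta$. Since $\isog$ has degree $\degree^2$ and $J$ has dimension $2$, any polarization on $J$ pulling back to $\lambda_\Theta$ must have degree $1$ (the degree of a polarization multiplies by $(\deg\isog)^2=\degree^4$ under pullback, and $\deg\lambda_\Theta=1$), so it suffices to produce \emph{some} polarization $\lambda'$ on $J$ with $\isog^*\lambda'=\lambda_\Theta$; principality is then automatic. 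Thus the whole statement reduces to verifying condition~(2) of Lemma~\ref{lemma:mumford}: that $\ker(\isog)=\Gamma_\isom$ is contained in $\ker(\lambda_\Theta)$ and that the pairing $e_{\lambda_\Theta}$ restricted to $\Gamma_\isom\times\Gamma_\isom$ is trivial.

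For the first point, $\lambda_\Theta$ is a principal polarization, so $\ker(\lambda_\Theta)=0$ — wait, that is too strong; what we actually need is $\Gamma_\isom\subset(E_1\times E_2)[\degree]$ together with $\ker(\lambda_\Theta)\subset(E_1\times E_2)[\degree]$ in the sense required to define $e_{\lambda_\Theta}$ on $\degree$-torsion. Concretely, the relevant pairing here is simply the Weil pairing $e_\degree$ on $(E_1\times E_2)[\degree]$ associated to the principal polarization $\lambda_\Theta$, which decomposes as the orthogonal product of the Weil pairings $e_\degree^{E_1}$ and $e_\degree^{E_2}$: for points $(P_1,P_2),(Q_1,Q_2)\in(E_1\times E_2)[\degree]$ one has
\begin{equation*}
e_\degree\bigl((P_1,P_2),(Q_1,Q_2)\bigr)=e_\degree^{E_1}(P_1,Q_1)\cdot e_\degree^{E_2}(P_2,Q_2).
\end{equation*}
So the key computation is the following. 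A general pair of points of $\Gamma_\isom$ has the form $(P,\isom(P))$ and $(Q,\isom(Q))$ with $P,Q\in E_1[\degree](\kbar)$; pairing them gives
\begin{equation*}
e_\degree^{E_1}(P,Q)\cdot e_\degree^{E_2}\bigl(\isom(P),\isom(Q)\bigr),
\end{equation*}
and this is trivial precisely because $\isom$ is anti-symplectic, i.e.~satisfies~\eqref{eq:isotropy_condition}: the second factor equals $e_\degree^{E_1}(P,Q)^{-1}$. Hence $e_{\lambda_\Theta}$ vanishes identically on $\Gamma_\isom\times\Gamma_\isom$, and since $\Gamma_\isom$ is isotropic it is in particular contained in the (trivial) kernel of $\lambda_\Theta$ viewed appropriately — more precisely, the Lagrangian property is exactly what Lemma~\ref{lemma:mumford}(2) demands, and it holds.

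Applying Lemma~\ref{lemma:mumford} now yields a line bundle $\MM$ on $J$ with $\isog^*\MM=\LL(\Theta)$ inducing a polarization $\lambda'$ on $J$; by the degree count above, $\deg\lambda'=1$, so $\lambda'$ is principal. I expect the only genuine subtlety — the ``hard part'' — to be bookkeeping of conventions: making sure that the pairing $e_{\lambda_\Theta}$ attached to the product principal polarization really is the orthogonal product of the two Weil pairings (with the correct signs/normalizations), and checking that $\Gamma_\isom$ being \emph{maximal} isotropic of the right order is what makes the quotient polarization principal rather than merely requiring a separate argument. Everything else — the degree multiplicativity, the descent via Lemma~\ref{lemma:mumford} — is formal. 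One should also note that $\degree$ coprime to $\ch(K)$ is used so that $\isog$ is separable and the Weil pairing machinery applies, matching the hypotheses under which $\isom$ was defined.
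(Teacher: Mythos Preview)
Your overall strategy---descend via Lemma~\ref{lemma:mumford} using that $\Gamma_\isom$ is isotropic for the Weil pairing, then count degrees---is the right one and is essentially what the cited reference does (the paper itself gives only the citation). But there is a genuine error in the setup, which you noticed (``wait, that is too strong'') and then papered over rather than fixed. Lemma~\ref{lemma:mumford} applied with $\LL=\LL(\Theta)$ requires $\ker(\isog)\subset\ker(\lambda_\Theta)=0$, which fails; equivalently, no polarization on $J$ can pull back to $\lambda_\Theta$, since $\deg(\isog^*\lambda')=(\deg\isog)^2\deg\lambda'\geq\degree^4>1$ for every polarization $\lambda'$ on $J$. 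So your degree argument, as written, establishes a vacuous implication.

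The fix is to apply Lemma~\ref{lemma:mumford} with $\LL=\LL(\degree\Theta)$ instead. Then $\lambda=\degree\,\lambda_\Theta$ has kernel $(E_1\times E_2)[\degree]\supset\Gamma_\isom$, and the pairing $e_\lambda$ on that kernel is precisely the Weil pairing $e_\degree$ (after identifying $E_1\times E_2$ with its dual via $\lambda_\Theta$), which does split as the orthogonal product of the two elliptic Weil pairings. Your isotropy computation is then valid as stated, Lemma~\ref{lemma:mumford} produces $\MM$ on $J$ with $\isog^*\MM\cong\LL(\degree\Theta)$, and the corrected degree count reads
\[
\degree^4=\deg(\degree\,\lambda_\Theta)=(\deg\isog)^2\deg\lambda'=\degree^4\deg\lambda',
\]
whence $\deg\lambda'=1$ and $\lambda'$ is principal. (This is also why Lemma~\ref{lemma:uniqueC} immediately afterwards works with divisors in the class of $\degree\Theta$ rather than $\Theta$.)
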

\begin{proof}
See p.~156 in~\cite{freykani}.
\end{proof}

\begin{lemma}
\label{lemma:uniqueC}
Suppose that $\degree$ is odd. Then there exists a unique effective divisor $C$ on $J\otimes\kbar$ such that $D=\isog^*(C)$ is linearly equivalent to $\degree\Theta$ and fixed by $[\minus 1]_{\E1\times\E2}$. The divisor $C$ is fixed by $[\minus 1]_J$ and principally polarizes $J$.
\end{lemma}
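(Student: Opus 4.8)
The plan is to realize $C$ as the unique effective divisor in a carefully normalized principal polarization class on $J$, and then to read off the asserted properties by pulling back along $\isog$; throughout we work over $\kbar$, so that $J(\kbar)$ is divisible. By Lemma~\ref{lemma:Jpolarized} together with Lemma~\ref{lemma:mumford}(1) — applied to the isogeny $\isog$ and the line bundle $\LL(\degree\Theta)$, whose hypotheses hold by Corollary~\ref{cor:mumford} and the anti-symplectic assumption on $\isom$ — there is an ample line bundle $\MM_0$ on $J$ with $\isog^{*}\MM_0\cong\LL(\degree\Theta)$, inducing a principal polarization $\lambda_{\MM_0}\colon J\xrightarrow{\sim}\chk{J}$. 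Since $\MM_0$ is ample and $\chi(\MM_0)^{2}=\deg\lambda_{\MM_0}=1$, we have $h^{0}(\MM_0)=1$, so $\MM_0$ possesses a unique effective divisor; the same then holds for any translate of $\MM_0$. The key observation is that $\MM_0$ need not be symmetric, and the plan is to translate it so that it becomes symmetric \emph{without} disturbing the relation $\isog^{*}\MM_0\cong\LL(\degree\Theta)$.

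The translations that do no harm are those by elements of the finite group $G:=\isog\bigl((\E1\times\E2)[\degree]\bigr)\subseteq J$, which has order $\degree^{2}$ because $\ker\isog=\Gamma_\isom\subseteq(\E1\times\E2)[\degree]$: writing $a=\isog(x)$ with $x\in(\E1\times\E2)[\degree]$, the theorem of the square gives $\isog^{*}(t_a^{*}\MM_0)\cong t_x^{*}\isog^{*}\MM_0\cong t_x^{*}\LL(\degree\Theta)\cong\LL(\degree\Theta)$ since $\degree x=0$. Now set $\mathcal P:=[\minus1]_J^{*}\MM_0\otimes\MM_0^{-1}$. Because $\degree\Theta$ is literally fixed by $[\minus1]_{\E1\times\E2}$ and $\isog\comp[\minus1]_{\E1\times\E2}=[\minus1]_J\comp\isog$, pulling back shows $\isog^{*}\mathcal P\cong\OO$, so $\mathcal P\in\ker(\chk{\isog})$, a group of order $\degree^{2}$; and the identity $\chk{\isog}\comp\lambda_{\MM_0}\comp\isog=\degree\lambda_\Theta$ shows $\lambda_{\MM_0}(G)\subseteq\ker(\chk{\isog})$, hence $\lambda_{\MM_0}(G)=\ker(\chk{\isog})$ by the order count. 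Therefore $b:=\lambda_{\MM_0}^{-1}(\mathcal P)\in G$, and since $\degree$ is odd, multiplication by $2$ is an automorphism of $G$, so there is a unique $a\in G$ with $2a=b$. Put $\MM:=t_a^{*}\MM_0$. A short computation with the theorem of the square (and with $[\minus1]^{*}$ acting as inversion on $\Pic^{0}$) then gives $[\minus1]_J^{*}\MM\cong\MM$, while $a\in G$ gives $\isog^{*}\MM\cong\LL(\degree\Theta)$. Let $C$ be the unique effective divisor with $\LL(C)\cong\MM$. Then $D:=\isog^{*}C$ is effective and linearly equivalent to $\degree\Theta$; moreover $[\minus1]_J^{*}C$ is effective in $|\MM|$, a single point, so $[\minus1]_J^{*}C=C$, and consequently $[\minus1]_{\E1\times\E2}^{*}D=\isog^{*}([\minus1]_J^{*}C)=D$. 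This also shows $C$ is fixed by $[\minus1]_J$, and $C$ principally polarizes $J$ because $\MM$ does.

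For uniqueness, let $C'$ be effective with $D':=\isog^{*}C'$ linearly equivalent to $\degree\Theta$ and fixed by $[\minus1]_{\E1\times\E2}$. From $\isog^{*}\LL(C')\cong\LL(\degree\Theta)\cong\isog^{*}\MM$ and the injectivity of $\isog^{*}$ on the torsion-free group $\NS(J)$ we get $\LL(C')\cong\MM\otimes\mathcal Q$ with $\mathcal Q\in\Pic^{0}(J)$, and then $\isog^{*}\mathcal Q\cong\OO$ forces $\mathcal Q\in\ker(\chk{\isog})$. On the other hand $\isog^{*}$ is injective on divisors (as $\isog$ is finite flat), so $[\minus1]_{\E1\times\E2}^{*}D'=D'$ yields $[\minus1]_J^{*}C'=C'$, hence $[\minus1]_J^{*}\LL(C')\cong\LL(C')$; combined with $[\minus1]_J^{*}\MM\cong\MM$ this forces $\mathcal Q\cong\mathcal Q^{-1}$, i.e. $\mathcal Q\in\chk{J}[2]\cap\ker(\chk{\isog})=\{0\}$, the intersection being trivial since $\ker(\chk{\isog})$ has odd order $\degree^{2}$. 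Thus $\LL(C')\cong\MM$ and $C'=C$ because $h^{0}(\MM)=1$.

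The one genuinely substantial step is the symmetrization in the second paragraph: isolating the group $G$ of harmless translations, checking that $\mathcal P$ actually lies in $\lambda_{\MM_0}(G)$, and then exploiting that $\degree$ is odd to halve $b$ inside $G$. Everything else is formal bookkeeping; the recurring pitfall to watch is to keep genuine isomorphism of line bundles (needed for $\isog^{*}\MM\cong\LL(\degree\Theta)$ and for the symmetry statements) distinct from mere linear equivalence of divisors, and to invoke the correct flavour of injectivity of $\isog^{*}$ — on $\NS$ versus on $\Div$ — in each place.
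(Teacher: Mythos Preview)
Your argument is correct. The paper itself does not supply a proof: it simply refers the reader to Proposition~1.1 and Corollary~1.2 of Frey--Kani, so your write-up is a genuine addition rather than a paraphrase. The strategy you use --- descend $\LL(\degree\Theta)$ through $\isog$ via Lemma~\ref{lemma:mumford}, then translate the resulting principal polarization class by an element of $G=\isog((E_1\times E_2)[\degree])$ to make it symmetric, using that $\degree$ is odd to halve inside $G$ --- is the standard one and is essentially what underlies the cited result; your uniqueness argument, intersecting $\ker(\chk{\isog})$ (odd order) with $\chk{J}[2]$, is clean and exactly the right way to finish. One cosmetic remark: the opening clause ``so that $J(\kbar)$ is divisible'' is never actually used, since the only halving you perform takes place inside the finite group $G$; you could drop it.
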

\begin{proof}
See Proposition 1.1 and Corollary 1.2 in \cite{freykani}.
\end{proof}

It is a well known theorem of Weil (Satz 2 in~\cite{weil}) that over $\kbar$ any principally polarized abelian surface is either a Jacobian or a product of two elliptic curves (with the usual polarizations). Therefore the question of whether or not $J$ is a Jacobian reduces to the question of whether or not the divisor $C$ is irreducible.
\par
\begin{lemma}
The divisor $C$ is irreducible if and only if the divisor~$D$ is irreducible.
\end{lemma}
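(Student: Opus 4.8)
The plan is to establish the two implications separately. The implication ``if $D$ is irreducible then $C$ is irreducible'' is the easy one, and I would prove its contrapositive: if $C$ is not irreducible, write $C = C_1 + C_2$ with $C_1,C_2$ nonzero effective divisors on $J$. Since $\isog$ is finite and surjective, $\isog^{*}C_1$ and $\isog^{*}C_2$ are again nonzero and effective, and $D=\isog^{*}C=\isog^{*}C_1+\isog^{*}C_2$ is therefore not irreducible. No structure theory of $J$ is needed for this direction.

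For the converse, ``if $C$ is irreducible then $D$ is irreducible'', I would begin with Weil's theorem: if $C$ is irreducible then $J$ is isomorphic, as a principally polarized abelian surface, to the Jacobian of a smooth projective curve $C_0$ of genus two, and under this isomorphism the symmetric divisor $C$ corresponds to the image of an Abel--Jacobi embedding $C_0\hookrightarrow\jac(C_0)$; in particular $C$ is \emph{smooth} of genus two. (That $C$ is reduced, so ``irreducible'' genuinely means ``integral'', also follows from $C^2=\Theta^2=2$.) Next, since $\deg\isog=\degree^2$ is coprime to $\ch(K)$, the isogeny $\isog$ is separable, hence étale; thus $D=\isog^{*}C$ equals the reduced preimage $\isog^{-1}(C)$, is smooth, and is étale of degree $\degree^2$ over $C$. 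Because $\isog$ is Galois with group $\Gamma_\isom=\ker\isog$ (of order $\degree^2$) acting by translations and $C$ is connected, $\Gamma_\isom$ permutes the pairwise disjoint smooth components of $D$ transitively; if there are $k$ of them and $D_1$ is one, then by orbit--stabilizer $D_1\to C$ is étale of degree $\degree^2/k$.

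It remains to force $k=1$, which I would do by computing the genus of $D_1$ two ways. Translations act trivially on $\NS(E_1\times E_2)$, so all $k$ components share one class; writing $f_1=[\{O_1\}\times E_2]$, $f_2=[E_1\times\{O_2\}]$ (so $f_1^2=f_2^2=0$, $f_1\cdot f_2=1$, and since $(f_1+f_2)\cdot f_1=1$ the class $f_1+f_2$ is primitive), and using $[D]=\degree[\Theta]=\degree(f_1+f_2)$, the relation $k[D_1]=\degree(f_1+f_2)$ forces $k\mid\degree$ and $[D_1]=m(f_1+f_2)$ with $m:=\degree/k\geq 1$. Adjunction on the abelian surface $E_1\times E_2$ (trivial canonical class) gives $2g(D_1)-2=D_1^2=2m^2$, so $g(D_1)=m^2+1$. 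On the other hand $D_1\to C$ is étale of degree $\degree^2/k=km^2$ between smooth curves with $C$ of genus two, so Riemann--Hurwitz gives $2g(D_1)-2=km^2(2\cdot2-2)=2km^2$, i.e. $g(D_1)=km^2+1$. Comparing, $(k-1)m^2=0$; as $m\geq 1$ this yields $k=1$, so $D$ is irreducible.

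The routine points are the étale-ness of $\isog$, the triviality of the translation action on $\NS$, the transitivity of $\Gamma_\isom$ on components, and the primitivity of $f_1+f_2$. The step carrying the real weight is the appeal to Weil's theorem to guarantee that an irreducible $C$ is \emph{smooth} of genus exactly two: this is exactly what makes Riemann--Hurwitz applicable to $D_1\to C$, and I would expect it to be the main conceptual obstacle; once it is in place, the rest is bookkeeping with divisor classes and a genus count.
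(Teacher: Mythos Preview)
Your argument is correct. Note that the paper itself does not prove this lemma: it simply refers the reader to Proposition~1.3 of Frey--Kani, so your write-up already supplies more than the paper does, and there is no in-paper proof to compare against.

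One small observation on the hard direction: once you have established that for $k\geq 2$ the components $D_1,\dots,D_k$ of the smooth curve $D$ are pairwise disjoint and all share the same class in $\NS(E_1\times E_2)$ (being translates of one another), you can finish without invoking Riemann--Hurwitz at all: disjointness gives $D_1\cdot D_2=0$, while numerical equivalence gives $D_1\cdot D_2=D_1^2=2m^2>0$, a contradiction. Your two-genus comparison reaches the same conclusion and is perfectly valid; this is just a marginally shorter variant of the same idea.
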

\begin{proof}
See Proposition 1.3 in \cite{freykani}.
\end{proof}
\par
\begin{remark}
\label{rem:reducibleC}
There exist examples with $C$ reducible. Let $\gamma\colon E_1\to E_2$ be an isogeny of degree~\mbox{$\degree-1$} and let $\isom\colon E_1[\degree]\stackrel{\sim}{\lra} E_2[\degree]$ be the anti-symplectic isomorphism that is the restriction of $\gamma$ to the $\degree$-torsion. Then $C$ is reducible. Moreover, whenever~$C$ is reducible, $E_1$, $E_2$, and the irreducible components of $C$ are all isogenous and therefore $J$ is generically a Jacobian.
\end{remark}

\section{Gluing two elliptic curves along the $3$-torsion}
\label{section:gluing33}
In this section we will deal with the case $\degree=3$, given two elliptic curves from the Hesse pencil. We begin by fixing additional assumptions.
\begin{assumption}
The field $K$ is of characteristic $\ch(K)\neq 2,3$ and it contains a primitive third root of unity.
\end{assumption}
From now on, let $\w\in K$ be such that $1+\w+\w^2=0$.
\par
\subsection{Prerequisites}
The one-dimensional family of curves given by
\begin{equation}
\label{eq:hesse_pencil}
E_a\colon x^3+y^3+z^3+3a xyz=0,
\end{equation}
is called the \emph{Hesse pencil}.\footnote{The $3$ does not usually appear in the definition. We added it for simplicity.} With the exception \mbox{of $a^3=-1$,} each $a\in K$ defines an elliptic curve $E_a$. We choose $[-1:1:0]$ to be the identity element. The group morphisms are given in the Appendix. We denote the set of elliptic curves in the Hesse pencil by~$\hesse$. The $j$-invariant of $E_a$ is
\begin{equation}
j({E_a})=-\frac{27a^3(a^3-8)^3}{(a^3+1)^3}
\label{eq:j_lambda}
\end{equation}
and $j\colon\hesse\to\Aff^1$ is $12$-to-$1$, except above $j=0$ \mbox{and $j=1728$}. In particular, the elements of 
\begin{align*}
\Biggl\{ & a, a\w, a\w^2, \frac{2-a}{1+a}, \frac{2-a}{1+a}\w, \frac{2-a}{1+a}\w^2, \frac{2\w-a}{\w+a},\\
 &\frac{2\w-a}{\w+a}\w, \frac{2\w-a}{\w+a}\w^2, \frac{2\w^2-a}{\w^2+a}, \frac{2\w^2-a}{\w^2+a}\w, \frac{2\w^2-a}{\w^2+a}\w^2 \Biggr\}
\end{align*}
define isomorphic elliptic curves.
\par
The $3$-torsion subgroup of every elliptic curve in~$\hesse$ is fully $K$-rational and is given by $xyz=0$. Therefore the same nine points in~$\PP^2$ are the $3$-torsion points of every element of~$\hesse$ and each of the points can be given by homogeneous coordinates that are a permutation of $\{0,1,u\}$, where $u^3=-1$. Moreover, the Hesse pencil is exactly the family of all cubics passing through these nine points. \par
There is a partial converse in the form of Lemma~\ref{lemma:converse} below. A short direct proof is included in the Appendix (see~\S\ref{app:proof}).
\begin{lemma}
\label{lemma:converse}
Every elliptic curve over $K$ with fully $K$-rational $3$-torsion is isomorphic to a quadratic twist of an element of the Hesse pencil.
\end{lemma}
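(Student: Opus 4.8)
The plan is to start from an arbitrary elliptic curve $E/K$ whose three $3$-torsion subgroups are all $K$-rational and realize it, up to quadratic twist, as a plane cubic passing through the nine base points of the Hesse pencil. First I would use the full $K$-rationality of $E[3]$ to produce a distinguished basis: pick any $K$-rational $3$-torsion point $P\neq O$; the quotient $E\to E/\langle P\rangle$ is a $3$-isogeny defined over $K$, and composing with a Weierstrass-type embedding one gets a degree-$3$ map $E\to\PP^1$ all of whose ramification is concentrated over $K$-rational points (the images of the points of $E[3]$). Equivalently — and more cleanly — I would invoke the classical fact that an elliptic curve with a rational $3$-torsion point admits a plane cubic model inflected at that point, i.e. a model with a $K$-rational flex. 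Since \emph{all} of $E[3]$ is rational, every point of $E[3]$ is a flex of this model.

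The key step is then the following classical characterization, which I would state and use: a smooth plane cubic over $K$ whose nine flexes are all $K$-rational is, after a $\PGL_3(K)$ change of coordinates, a member of the Hesse pencil $x^3+y^3+z^3+3axyz=0$. Indeed, the nine flexes of any smooth plane cubic form a $(9_4,12_3)$ configuration (the Hesse configuration): they lie on $12$ lines, four through each point, and the pencil of cubics through these nine points is exactly the Hesse pencil in suitable coordinates. If the nine flexes are $K$-rational, one first moves three collinear flexes to the three coordinate vertices and a fourth to $[1:1:1]$; a short computation (which I would not carry out in full) forces the cubic into the shape $x^3+y^3+z^3+3axyz=0$, with $a\in K$ since the remaining coefficient is rational. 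This identifies $E$ with a plane model of some $E_a\in\hesse$.

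Finally I would match up the group structures. The plane cubic model of $E$ obtained above carries the group law for which collinearity is the addition relation and \emph{some} chosen flex is the origin; on $E_a$ the chosen origin is $[-1:1:0]$. Two smooth plane cubics that coincide as curves but whose group laws are based at different flexes differ by translation by a $3$-torsion point, hence are isomorphic as elliptic curves (translations are automorphisms of the variety respecting the flex set). Therefore $E$ is isomorphic over $\kbar$ — in fact over $K$ once the flexes are rational — to $E_a$ \emph{as a curve}; the only discrepancy between "isomorphic as a curve" and "isomorphic as an elliptic curve with the prescribed affine model" is a quadratic twist, coming from the usual ambiguity in writing down a Weierstrass/Hesse model (the sign of the leading coefficient, or equivalently the hyperelliptic-type twist for $\ch K\neq 2$). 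This yields that $E$ is a quadratic twist of $E_a$, as claimed. I expect the main obstacle to be the bookkeeping in the second step: verifying carefully, with attention to $K$-rationality and to the exclusion of $\ch K\in\{2,3\}$, that rationality of the Hesse configuration really does force the normal form, rather than merely some twisted or degenerate cubic; the Appendix reference (\S\ref{app:proof}) presumably carries out exactly this direct computation.
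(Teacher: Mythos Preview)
Your approach differs substantially from the paper's. The Appendix proof is purely computational: it starts from a short Weierstrass model $y^2=x^3+ax+b$, observes that the $3$-division quartic $3x^4+6ax^2+12bx-a^2$ must split completely over $K$, and then writes down explicit rational functions $t,u$ of its roots such that $au^2$ and $bu^3$ are the Weierstrass coefficients of a specific $E_t\in\hesse$, together with an explicit linear isomorphism from the Hesse model to that Weierstrass model. The scaling by $u$ is a $K$-isomorphism only when $u$ is a square, which is where the quadratic twist enters.

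Your geometric route via the flex configuration is classical and viable in outline, but the key normalization step is mis-stated. You propose to send ``three collinear flexes to the three coordinate vertices and a fourth to $[1:1:1]$''; this is impossible, since the coordinate vertices are not collinear (no projective transformation takes a collinear triple to them), and in any case no Hesse cubic $x^3+y^3+z^3+3axyz=0$ passes through a coordinate vertex. The correct move is to send one of the four inflectional \emph{triangles} --- a $K$-rational triple of flex-\emph{lines}, not flex-points --- to the coordinate triangle $xyz=0$, and then use the residual diagonal torus to pin down the three flexes on each coordinate line. That this lands on the standard Hesse base points (using $\omega\in K$) is the real content of the argument and is not the ``short computation'' you suggest; it is precisely what the paper's explicit formulas accomplish by brute force.

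Your last paragraph on the twist is also confused. If a $\PGL_3(K)$ normalization to Hesse form succeeds, you obtain $E\cong E_a$ over $K$ outright: two curves isomorphic over $K$ as varieties, each with a $K$-point, are isomorphic as elliptic curves after composing with a translation (and translations by $K$-rational flexes are defined over $K$). There is no residual ``Weierstrass/Hesse ambiguity'' producing a twist. So your route, once the normalization is correctly set up and carried out, would in fact prove the marginally sharper statement $E\cong E_a$; the twist in the paper's formulation is an artifact of its computational method rather than an intrinsic obstruction.
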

This can be thought of as an explicit realization of the fact that the modular curve~$X(3)$ has genus zero (see pp. 22--23 in~\cite{shimura}, for example).
\par
Now let $S=[-1:0:1]$ and $T=[-w:1:0]$. These two points generate $E[3]$ for every $E\in\hesse$. From now on, we are going to fix an isomorphism $\eta\colon E[3]\stackrel{\sim}{\lra}(\ZZ/3\ZZ)^2$ for every elliptic curve $E$ in~$\hesse$, given by $\eta(S)=(1,0)$ and $\eta(T)=(0,1)$.
\par
Let $E$ be an elliptic curve in the Hesse pencil. The Weil pairing on $E[3]$ is completely determined by the value $e_3(S,T)$ and we can easily calculate $e_3(S,T)=\w.$ 
For example, using Chapter III~\S8 in~\cite{aec}, we find that $e_3(S,T)=g(P+T)/g(P)$, where
$$
g = \frac{x^2z+y^2x+z^2y}{xyz}\in K(E)
$$
and $P\in E(\kbar)\setminus\left(E[3]\cup(P+E[3])\right)$. It follows that the Weil pairing on $E[3]$ is given by
$$
e_3(P,Q)=\w^{\det(\eta(P),\eta(Q))}
$$
and we can interpret it as the determinant map
$$
\det\colon\ZZ/3\ZZ\times\ZZ/3\ZZ\to\ZZ/3\ZZ.
$$
Since $\Aut(\ZZ/3\ZZ)\cong \GL_2(\ZZ/3\ZZ)$ is a group of order~$48$, an anti-symplectic isomorphism $E_1[3]\stackrel{\sim}\lra E_2[3]$ corresponds to one of the $24$ elements of the coset $\smallmat{0}{1}{1}{0} \SL_2(\ZZ/3\ZZ)$. However, since each isomorphism can be composed with $[\minus 1]$, we are left with $12$ distinct cases at most.
\par
Before we deal with the general case, we consider a concrete example in which gluing two elliptic curves does not give a Jacobian.
\begin{example}
\label{ex:split_glue}
Let $a=-(1+2t^3)/(3t^2)$ for some~$t\in K$, such that $$t(t^3-1)(8t^3+1)\neq 0.$$ Then the elliptic curve
$$E_a\colon x^3+y^3+z^3-\frac{1 + 2 t^3}{t^2} x y z =0$$ 
has a rational point $[t:t:1]$ of order two. Let $b=(1-4t^3)/(3t)$. Then the map
$$
\gamma\colon E_a\to E_b,\quad [x:y:z]\mapsto[f_1(x,y,z) : f_2(x,y,z) : f_3(x,y,z)],
$$
where
\begin{align*}
&f_1 = x (-2 t^2 y^2 - t^2 x y + t^2 x^2 - y z + 2 t^3 x z + t z^2),\\
&f_2 = y (-2 t^2 x^2 - t^2 x y + t^2 y^2 - x z + 2 t^3 y z + t z^2),\\
&f_3 = t z (x + y + t z) (x + y - 2 t z),
\end{align*}
is an isogeny whose kernel is the cyclic group of order two that is generated by the point $[t:t:1]$. Restricting $\gamma$ to the $3$-torsion, we obtain the isomorphism \mbox{$\isom\colon E_a[3]\to E_b[3]$} that corresponds to $\smallmat{1}{0}{0}{2}\in\GL_2(\ZZ/3\ZZ)$. We have that \mbox{$J:=(E_a\times E_b)/\Gamma_\isom$} is isomorphic to $E_a\times E_b$ as a principally polarized abelian surface (recall Remark~\ref{rem:reducibleC}). We note that $a$ and $b$ satisfy
\begin{equation}
3a^2b^2 + a^3 + b^3 - 3ab + 2 = 0,
\label{eq:disc_vanishes}
\end{equation}
which is an equation describing a singular affine curve of genus zero.
\end{example}

We now consider the isomorphism from Example~\ref{ex:split_glue} in full generality. From now on, we fix $\isom=\smallmat{1}{0}{0}{2}$. Let $E_1$ and $E_2$ be two elliptic curves in $\hesse$, corresponding to parameters $a$ and $b$, respectively. Let $A$ and $G$ respectively denote the images of~$E_1\times E_2$ and $\Gamma_\isom$ in $\PP^8$ under the Segre embedding $\sigma$, given by
\begin{equation*}
\left([x_1:y_1:z_1],[x_2:y_2:z_2]\right)\mapsto [x_1x_2:x_1y_2:x_1z_2:y_1x_2:y_1y_2:y_1z_2:z_1x_2:z_1y_2:z_1z_2].
\end{equation*}
The identity element of $A$ is $O_A=[1:-1:0:-1:1:0:0:0:0]$ and the inversion morphism $[\minus 1]_A$ is given by
\begin{equation}
[X_1:X_2:\cdots:X_9]\mapsto[X_5:X_4:X_6:X_2:X_1:X_3:X_8:X_7:X_9].
\label{eq:linear_minus_id}
\end{equation}
Let $\Theta=\sigma(E_1\times\{O_2\})+\sigma(\{O_1\}\times E_2)$ and let $D$ denote the effective divisor on~$A$ that is linearly equivalent to~$3\Theta$, invariant under $[\minus 1]_A$, and invariant under the translation by the points of~$G$. Let $\isog\colon A\to J$ denote the isogeny with kernel $G$ and let $C=\isog(D)$.

\subsection{The computations}
Now we will go over the steps that lead to the Igusa--Clebsch invariants of a genus-$2$ curve whose Jacobian is isomorphic to $(E_1\times E_2)/\Gamma_\isom$ as a principally polarized abelian surface, if such a curve exists. All computations were performed using the computational algebra system \textsc{Magma}~\cite{magma}. Naturally, some technical details will be omitted.
\par
The \emph{Igusa--Clebsch invariants} are the invariants $A'$, $B'$, $C'$, $D'$ defined on p.~319 of~\cite{mestre}. The \emph{Igusa invariants} are the invariants $J_2$, $J_4$, $J_6$, $J_8$, $J_{10}$ defined on p.~324 ibid.
By the corresponding \emph{absolute invariants} we mean the values
$$
j_1 = \frac{J_2^5}{J_{10}},\quad j_2 = \frac{J_2^3 J_4}{J_{10}},\quad j_3=\frac{J_2^2 J_6}{J_{10}}.
$$
\par
We start by computing the ideal $I=I(A)$ that defines $A$ as a variety in~$\PP^8$. This is a straightforward computation and is omitted here.
\begin{lemma}
\label{lemma:lemma1}
Let $\mathcal{W}_1$ and $\mathcal{W}_2$ denote the set of geometric points of order two on~$\sigma(E_1\times\{O_2\})$ and the set of geometric points of order two on $\sigma(\{O_1\}\times E_2)$, respectively. Then any hyperplane section on $A$ that is invariant under~$[\minus 1]_A$ contains either $\mathcal{W}_1\cup \mathcal{W}_2$ or its complement in $A[2](\kbar)$.
\end{lemma}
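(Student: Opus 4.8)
I want to show that an $[\minus 1]_A$-invariant hyperplane section $H\cap A$ either contains all four points of $\mathcal{W}_1\cup\mathcal{W}_2$ or contains none of them (equivalently, contains the complementary four $2$-torsion points). The key structural fact is that $A[2](\kbar)$ has $16$ points, split as $\{O_A\}\cup\mathcal{W}_1\cup\mathcal{W}_2\cup\mathcal{W}_{12}$ where $\mathcal{W}_1=\{O_1\}\times E_2[2]\setminus\{O_A\}$, $\mathcal{W}_2=E_1[2]\setminus\{O_1\}\times\{O_2\}$ each have three points, and $\mathcal{W}_{12}$ (the "mixed" $2$-torsion, with both coordinates of exact order two) has nine points. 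All sixteen are fixed by $[\minus 1]_A$. The strategy is: first identify the linear span of each of these orbit-pieces inside $\PP^8$ under the Segre embedding $\sigma$, then observe that the fixed locus of the linear involution \eqref{eq:linear_minus_id} is a union of two linear subspaces $\PP^5\sqcup\PP^2$ (the $+1$ and $-1$ eigenspaces of the matrix), and finally do a dimension count.

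\textbf{Step 1: Locate the eigenspaces of $[\minus 1]_A$.} The map \eqref{eq:linear_minus_id} is a permutation-type involution on coordinates $X_1,\dots,X_9$; its eigenspace decomposition is immediate. The swaps $(X_1 X_5),(X_2 X_4),(X_3 X_6),(X_7 X_8)$ with $X_9$ fixed give a $+1$-eigenspace of dimension $6$ (spanned by $X_1{+}X_5,\ X_2{+}X_4,\ X_3{+}X_6,\ X_7{+}X_8,\ X_9$) and a $-1$-eigenspace of dimension $3$ (spanned by $X_1{-}X_5,\ X_2{-}X_4,\ X_3{-}X_6,\ X_7{-}X_8$). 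So the fixed locus of $[\minus 1]_A$ in $\PP^8$ is $\PP^5\cup\PP^2$. Any $[\minus 1]_A$-invariant hyperplane $H$ has a defining linear form that is a sum of a $+1$-eigenvector and a $-1$-eigenvector.

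\textbf{Step 2: Compute the images of the $2$-torsion and their spans.} Using the explicit group law on the Hesse pencil recalled in the Appendix, write down $E_i[2]$ for $i=1,2$ explicitly (three nontrivial points each, with coordinates rational functions of $a$ resp.\ $b$), apply $\sigma$, and compute the linear span of $\sigma(\{O_1\}\times E_2[2])$, of $\sigma(E_1[2]\times\{O_2\})$, and of $\mathcal{W}_1\cup\mathcal{W}_2$ together. I expect that $\{O_A\}\cup\mathcal{W}_1$ spans a line, $\{O_A\}\cup\mathcal{W}_2$ spans a line, and hence $\{O_A\}\cup\mathcal{W}_1\cup\mathcal{W}_2$ spans a plane $\Pi$, while $O_A$ itself lies in the $+1$-eigenspace $\PP^5$. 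The crucial point to extract from the computation is: \emph{the four points of $\mathcal{W}_1\cup\mathcal{W}_2$, together with $O_A$, span only a plane, and this plane meets the $-1$-eigenspace $\PP^2$ in a single point (or is disjoint from it)}; consequently a hyperplane either contains $\Pi$ entirely or meets it in a line not passing through certain of the $\mathcal{W}_i$. I then match this against the $[\minus 1]_A$-invariance constraint: because $H$ is invariant and the four points lie in $\Pi$, the section $H\cap\Pi$ is itself $[\minus 1]_A$-invariant, so if it contains one point of an $[\minus 1]_A$-orbit it — being linear and forced by the eigenspace geometry — contains the whole set $\mathcal{W}_1\cup\mathcal{W}_2$; if it contains none, then by the analogous span computation for $\mathcal{W}_{12}$ it must contain the complement.

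\textbf{The main obstacle.} The delicate part is the last implication: showing that an $[\minus 1]_A$-invariant hyperplane cannot separate $\mathcal{W}_1\cup\mathcal{W}_2$ into a proper nonempty subset and its proper nonempty complement within $A[2]$. This is really a statement about how the sixteen points of $A[2]$ sit relative to the two eigen-subspaces $\PP^5,\PP^2$ and how they are permuted by the residual action of translations-by-$2$-torsion (which normalize $[\minus 1]_A$). I would handle it by computing, in \textsc{Magma}, the $16\times 9$ matrix of coordinates of $\sigma(A[2])$ and checking directly that the only $[\minus 1]_A$-invariant hyperplanes through any point of $\mathcal{W}_1\cup\mathcal{W}_2$ are exactly those through all of $\mathcal{W}_1\cup\mathcal{W}_2$ — equivalently, that the rank of the submatrix indexed by $\{O_A\}\cup\mathcal{W}_1\cup\mathcal{W}_2$, after projecting onto the $+1$-eigenspace, is $3$ while after projecting onto the $-1$-eigenspace it drops, pinning down the possible invariant linear forms. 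The symmetry between $\mathcal{W}_1$ and $\mathcal{W}_2$ (exchanging the two Hesse factors) and between $\mathcal{W}_1\cup\mathcal{W}_2$ and its complement (a translation) reduces the bookkeeping considerably, but the honest verification is a short explicit linear-algebra computation over $K(a,b)$ rather than something one proves by pure thought.
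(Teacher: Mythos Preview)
Your proposal contains a genuine error at the hinge point. In Step~1 you write that ``any $[-1]_A$-invariant hyperplane $H$ has a defining linear form that is a \emph{sum} of a $+1$-eigenvector and a $-1$-eigenvector.'' This is false: if $H=\{L=0\}$ is preserved by the linear involution then $L\circ[-1]_A=cL$ for some scalar $c$, and since the involution squares to the identity, $c=\pm1$. Hence $L$ itself is already an eigenvector, lying entirely in one of the two eigenspaces. (If $L=L_++L_-$ with both pieces nonzero, then $L\circ[-1]_A=L_+-L_-$ is not proportional to $L$.) This is precisely the observation the paper uses, and once you have it the lemma is immediate: any $2$-torsion point of $A$ is fixed by $[-1]_A$ and therefore lies in one of the two projectivized eigenspaces; a direct check shows the six points of $\mathcal{W}_1\cup\mathcal{W}_2$ are annihilated by every form in $S_1$ while the remaining ten (including $O_A$) are annihilated by every form in $S_2$. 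So an invariant hyperplane defined by a form in $\mathrm{span}(S_1)$ automatically contains $\mathcal{W}_1\cup\mathcal{W}_2$, and one defined by a form in $\mathrm{span}(S_2)$ automatically contains the complement.

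Because you miss this, your Steps~2--3 fight a phantom obstacle (``an invariant hyperplane might separate $\mathcal{W}_1\cup\mathcal{W}_2$'') and fall back on a Magma rank computation that is not needed. There are also some slips worth fixing: the $+1$-eigenspace has linear dimension $5$ (not $6$) and the $-1$-eigenspace has dimension $4$ (not $3$), so the fixed locus in $\PP^8$ is $\PP^4\sqcup\PP^3$; and $\mathcal{W}_1\cup\mathcal{W}_2$ consists of six points (three on each factor), not four, with complement of size ten in $A[2](\kbar)$.
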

\begin{proof}
The two eigenspaces of~\eqref{eq:linear_minus_id} are respectively generated by the sets
\begin{equation}
\begin{split}
S_1&=\{X_1+X_5, X_2+X_4, X_3+X_6, X_7+X_8, X_9\},\\
S_2&=\{X_1-X_5, X_2-X_4, X_3-X_6, X_7-X_8\}.
\end{split}
\label{eq:invol_invariants}
\end{equation}
By adding the corresponding linear forms from~\eqref{eq:invol_invariants} to~$I$, we find that~$A[2](\kbar)$ consists of six points that are in the zero locus of the ideal generated by~$S_1$ and ten points that are in the zero locus of the ideal generated by~$S_2$. Since every linear form that is an eigenvector for~$[\minus 1]_A$ is a linear combination of the elements of exactly one of these two sets, the claim follows.
\end{proof}
\begin{corollary}
\label{cor:cor1}
The quotient $J=A/G$ is a Jacobian if and only if the six geometric points of $\mathcal{W}_1\cup \mathcal{W}_2$ are the only $2$-torsion points on the divisor~$D$.
\end{corollary}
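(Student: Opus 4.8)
The plan is to combine Weil's theorem on principally polarized abelian surfaces with the structure of the divisor $C$ provided by Lemma~\ref{lemma:uniqueC} and the $2$-torsion analysis of Lemma~\ref{lemma:lemma1}. Recall that $J = A/G$ is principally polarized by $C$, so by Weil's theorem $J$ is a Jacobian precisely when $C$ is irreducible, equivalently (by the preceding lemma) when $D$ is irreducible. So the task reduces to characterizing irreducibility of $D$ in terms of its $2$-torsion points. Since $D \sim 3\Theta$ is invariant under $[\minus 1]_A$, its $2$-torsion is among the sixteen points of $A[2](\kbar)$, and by Lemma~\ref{lemma:lemma1} any such $[\minus 1]_A$-invariant hyperplane section contains either $\mathcal{W}_1 \cup \mathcal{W}_2$ (six points) or its ten-point complement.

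First I would rule out the ten-point case: a hyperplane section of the abelian surface $A$ has arithmetic genus $2$ (it is algebraically equivalent to $3\Theta$, with self-intersection $(3\Theta)^2 = 18$, so by adjunction $p_a = 1 + \tfrac{1}{2}(D^2 + D\cdot K_A) = 1 + 9 = 10$; more precisely one checks $D$ is a genus-$2$ curve's worth of sections), and an irreducible such curve of arithmetic genus $10$ (or its normalization) cannot pass through ten $2$-torsion points in the required symmetric fashion without being reducible — more cleanly, one argues directly that if $D$ contained the ten-point set then, pulling back along $\isog$ is not needed, we instead note $D$ would meet each of the two elliptic factors (and their translates) too many times, forcing a component to be one of those elliptic curves. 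The cleanest route: if $\mathcal{W}_1 \cup \mathcal{W}_2 \not\subset D$, then $D$ contains the complementary ten points, but $\Theta \cdot D = 3\,\Theta^2 = 3$ shows each translate $\Theta_x$ of $\Theta$ meets $D$ in only $3$ points counted with multiplicity, and since $\mathcal{W}_1$ together with $O_A$ lie on $\sigma(E_1 \times \{O_2\})$, forcing four of them onto $D$ makes that elliptic curve a component of $D$, hence $D$ reducible, hence $J$ a product — but that case is already excluded from "is a Jacobian''. So we may assume $\mathcal{W}_1 \cup \mathcal{W}_2 \subset D$, and the question becomes whether these six are \emph{all} the $2$-torsion on $D$ or whether $D$ also contains some of the remaining ten.

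The forward implication is then: if $D$ (equivalently $C$) is irreducible, $J = \jac(C)$ with $C$ a genus-$2$ curve, and the $2$-torsion points lying on the theta divisor $C \subset J$ are exactly the images of the six Weierstrass points — six points — so $D$ meets $A[2](\kbar)$ in exactly six points, which must be $\mathcal{W}_1 \cup \mathcal{W}_2$ by the above. Conversely, if the only $2$-torsion on $D$ is $\mathcal{W}_1 \cup \mathcal{W}_2$, then $D$ cannot be reducible: by Remark~\ref{rem:reducibleC} and Lemma~\ref{lemma:uniqueC}, a reducible $C$ decomposes as a sum of two elliptic curves meeting in one point, so $D = \isog^*(C)$ would be a sum of translated copies of $E_1$ and $E_2$ inside $A$; counting, such a configuration contains strictly more than six $2$-torsion points (each elliptic component passes through four points of $A[2]$, and a short combinatorial check using the graph $G = \Gamma_\isom$ of $\isom = \smallmat{1}{0}{0}{2}$ shows the total exceeds six), contradicting the hypothesis. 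Hence $D$ is irreducible and $J$ is a Jacobian.

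The main obstacle I expect is the reducible-to-many-$2$-torsion-points step in the converse: one must show cleanly that \emph{every} reducible $D$ of the prescribed type carries more than the six points $\mathcal{W}_1 \cup \mathcal{W}_2$. This is where the specific isogeny structure matters — the components of a reducible $C$ are mutually isogenous elliptic curves (Remark~\ref{rem:reducibleC}), their preimages under $\isog$ are unions of translates of $E_1$ and $E_2$ (the only elliptic curves in $A$ up to translation, since $E_1, E_2$ need not be isogenous in general, but when $C$ is reducible they are), and a direct incidence computation — ideally done once in \textsc{Magma} as elsewhere in the paper, or by a hand count of how $A[2] \cong (\ZZ/2)^4$ distributes over such translates — pins down the count. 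An alternative that sidesteps casework: invoke the theta-characteristic description, namely that for a genus-$2$ Jacobian the even theta divisor passes through exactly the six odd $2$-torsion points, so "six $2$-torsion points on $D$'' is equivalent to "$D$ is a symmetric theta divisor of Jacobian type'', which by Weil is equivalent to irreducibility. I would present the argument via this theta-characteristic route if space permits, falling back on the incidence count otherwise.
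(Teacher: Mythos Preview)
Your argument has a genuine gap: you never explain why the number of $2$-torsion points of $A$ lying on $D$ equals the number of $2$-torsion points of $J$ lying on $C$. In the forward direction you write ``six points on $C$, so $D$ meets $A[2](\kbar)$ in exactly six points'' without justification, and in the converse you resort to an ad hoc incidence count on the components of $D$. The paper's key observation, which you do not use, is that $\deg\isog=9$ is odd, so $\isog$ restricts to an \emph{isomorphism} $A[2]\stackrel{\sim}{\to}J[2]$; since $D=\isog^{-1}(C)$ set-theoretically, this gives a bijection between $D\cap A[2](\kbar)$ and $C\cap J[2](\kbar)$. One then simply counts fixed points of $[\minus 1]_J$ on $C$: six (the Weierstrass points) when $C$ is an irreducible genus-$2$ curve, and seven ($4+4-1$) when $C$ is a pair of elliptic curves meeting in a single $2$-torsion point. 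Thus $|D\cap A[2](\kbar)|\in\{6,7\}$, with $6$ precisely in the Jacobian case. Lemma~\ref{lemma:lemma1} then forces the six points to be $\mathcal{W}_1\cup\mathcal{W}_2$, since the ten-point alternative is impossible ($10>7$). No intersection theory, component analysis, or \textsc{Magma} check is needed.

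Two further errors in your sketch: first, $\Theta^2=2$ on a principally polarized abelian surface, so $\Theta\cdot D=6$, not $3$ (it is each individual factor $\sigma(E_i\times\{O_j\})$ that meets $D$ with multiplicity $3$). Second, your argument to exclude the ten-point case is misaimed: in that case the points on $D$ would be $O_A$ together with the nine $(P,Q)$ with \emph{both} coordinates of exact order two, and apart from $O_A$ none of these lies on $\sigma(E_1\times\{O_2\})$, so you cannot force that curve to be a component by overcounting intersections with it.
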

\begin{proof}
The divisor $C$ is either a curve of genus two or a sum of two elliptic curves that meet in a rational $2$-torsion point. Since $[\minus 1]_J$ induces a hyperelliptic involution~$\invol$ on the irreducible components of~$C$, we conclude that $C(\kbar)$ contains exactly six points fixed by~$\invol$ if and only if it is irreducible and that it contains exactly seven points fixed by~$\invol$ if and only if it is reducible. Since $\deg\isog$ is odd, the restriction of $\isog$ to the $2$-torsion is an isomorphism and there is exactly one geometric point of $(E_1\times E_2)[2]$ above each point of $C(\kbar)$ that is fixed by~$\invol$. Therefore $D(\kbar)$ contains at most seven $2$-torsion points. By Lemma~\ref{lemma:lemma1},~$D(\kbar)$ contains at least the order-$2$ points of $\sigma(E_1\times\{\zero2\})$ and~$\sigma(\{\zero1\}\times E_2)$ and the claim follows.
\end{proof}
A fact crucial to our approach is that the translations by the points of~$A[3]$ are linear. In fact, they can be extended to automorphisms of $\PP^8$. This is a consequence of the fact that $A$ is embedded into $\PP^8$ via an embedding corresponding to $L(3\Theta)$. It can also be shown directly, using the addition formulas. In particular, the group of translations by the points of~$G$ is generated by the following two automorphisms:
\begin{align*}
[X_1:X_2:\dots:X_9]&\mapsto[X_5:X_6:X_4:X_8:X_9:X_7:X_2:X_3:X_1]:\\
[X_1:X_2:\dots:X_9]&\mapsto[X_1:\w X_2:\w^2 X_3:\w^2 X_4:X_5:\w X_6:\w X_7:\w^2 X_8:X_9]
.\end{align*}
From this we immediately determine that the nine effective divisors invariant under the action of~$G$ and linearly equivalent to~$3\Theta$ are the hyperplane sections defined by the following linear forms:
\begin{align*}
L_1 &= X_1 + X_5 + X_9,&
L_6 &= \w^2 X_3 + \w X_4 + X_8,\\
L_2 &= \w X_1 + \w^2 X_5 + X_9,&
L_7 &= \w X_2 + \w^2 X_6 + X_7,\\
L_3 &= \w^2 X_1 + \w X_5 + X_9,&
L_8 &= \w X_3 + \w^2 X_4 + X_8,\\
L_4 &= X_3 + X_4 + X_8,&
L_9 &= \w^2 X_2 + \w X_6 + X_7.\\
L_5 &= X_2 + X_6 + X_7,&
\end{align*}
We note that the divisor~$D$, that is invariant under~$[\minus 1]_A$, is defined by~$L_1=0$ and does not contain~$O_A$. Now we can compute the scheme that is the intersection of~$D$ and the nine points of~$A[2](\kbar)$ that are not $2$-torsion points on $\sigma(E_1\times\{O_2\})$ or~$\sigma(\{O_1\}\times E_2)$ and apply Corollary~\ref{cor:cor1}. Dehomogenizing by setting $X_9=1$, taking the corresponding ideal in the ring~$K[X_1,\dots,X_8,a,b]$, and eliminating the~$X_i$ gives
\begin{equation}
3a^2b^2 + a^3 + b^3 - 3ab + 2 = 0
\label{eq:disc_vanishes2}
\end{equation}
Note that this matches \eqref{eq:disc_vanishes}. Thus we have obtained the following (cf. Proposition~2.2. and Corollary~2.3 in \cite{freykani}).
\begin{proposition}
The principally polarized abelian surface $J=A/G$ is a product of two elliptic curves if and only if~\eqref{eq:disc_vanishes2} holds, i.e. if and only if~$E_1$ and~$E_2$ are $2$-isogenous.
\end{proposition}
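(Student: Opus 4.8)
The plan is to combine Weil's dichotomy for principally polarised abelian surfaces with Corollary~\ref{cor:cor1} to reduce the statement to a single incidence question in $\PP^8$, and then to read off the answer from the elimination set up just above. By Satz~2 in~\cite{weil}, $J$ is either a Jacobian or a product of two elliptic curves with their product polarisation, so by Corollary~\ref{cor:cor1} the latter happens precisely when $D$ meets $A[2](\kbar)$ in a point outside the six points of $\mathcal{W}_1\cup\mathcal{W}_2$. The proof of Corollary~\ref{cor:cor1} shows that $D(\kbar)$ carries at most seven $2$-torsion points, and Lemma~\ref{lemma:lemma1}, together with $L_1(O_A)\neq 0$, pins down the only possible seventh one as a point of $A[2](\kbar)\setminus(\mathcal{W}_1\cup\mathcal{W}_2\cup\{O_A\})$, i.e. one of the nine non-identity points lying in the common zero locus of the forms $S_2$ of~\eqref{eq:invol_invariants}. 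Hence $J$ is a product of two elliptic curves if and only if the hyperplane $\{L_1=0\}$ passes through one of those nine points for the given parameters $a,b$.

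This is exactly the incidence tested by the elimination: working in $K[X_1,\dots,X_8,a,b]$ after dehomogenising by $X_9=1$, one intersects the affine cone over $A$ with $L_1=0$ and with the equations cutting out the nine candidate points, and eliminates $X_1,\dots,X_8$; the resulting ideal in $K[a,b]$ is $(3a^2b^2+a^3+b^3-3ab+2)$, which is~\eqref{eq:disc_vanishes2}. Two points deserve care: first, one should confirm that none of the nine candidate points lies on $X_9=0$, so that nothing relevant is lost by dehomogenising ($O_A$ is the only $2$-torsion point dropped, and it is harmless since it is not on $D$); second, one should check that the elimination ideal really is the full principal ideal generated by~\eqref{eq:disc_vanishes2} and not a proper sub- or over-ideal, e.g. by verifying it is radical or by exhibiting, for a generic point of the plane curve~\eqref{eq:disc_vanishes2}, an actual incident point. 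This yields the equivalence ``$J$ is a product of two elliptic curves $\iff$~\eqref{eq:disc_vanishes2}''.

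It then remains to recognise~\eqref{eq:disc_vanishes2} as the condition that $E_1$ and $E_2$ be $2$-isogenous. One implication is already visible in Example~\ref{ex:split_glue}: for $(a,b)=\bigl(-(1+2t^3)/(3t^2),(1-4t^3)/(3t)\bigr)$ there is an explicit degree-$2$ isogeny $\gamma\colon E_a\to E_b$, and $t\mapsto(a,b)$ is a rational parametrisation of the smooth locus of the genus-zero curve~\eqref{eq:disc_vanishes2}; since being $2$-isogenous is a closed condition on the pair of $j$-invariants (it is cut out by the classical modular polynomial $\Phi_2$) and the parametrised pairs are dense in~\eqref{eq:disc_vanishes2}, \emph{every} solution of~\eqref{eq:disc_vanishes2} has $E_a$ and $E_b$ $2$-isogenous. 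For the converse one substitutes the $j$-invariant formula~\eqref{eq:j_lambda} into $\Phi_2$ and checks that, among the several components of the resulting curve in the $(a,b)$-plane (there are several because $j$ is $12$-to-$1$ on $\hesse$), the component attached to the fixed anti-symplectic gluing $\isom=\smallmat{1}{0}{0}{2}$ is precisely~\eqref{eq:disc_vanishes2}; equivalently, one invokes Remark~\ref{rem:reducibleC} after normalising a $2$-isogeny between two such curves (post-composing with an automorphism of $E_b$ and replacing $b$ by a suitable parameter within its Hesse isomorphism class) so that its restriction to the $3$-torsion is exactly $\isom$, whence $C$ is reducible and~\eqref{eq:disc_vanishes2} follows from the first part.

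I expect the elimination itself to be routine and the genuine obstacle to be this last identification: showing that~\eqref{eq:disc_vanishes2} is \emph{exactly} the $2$-isogeny locus attached to the chosen gluing $\isom$, neither larger nor smaller, rather than merely one piece of the pullback of $X_0(2)$ along~\eqref{eq:j_lambda}. The cleanest way around it should be the $\Phi_2$-substitution combined with a degree count in $b$: both~\eqref{eq:disc_vanishes2} and the ``correct'' component are cubic in $b$, matching the three order-$2$ subgroups of a fixed $E_a$, which forces them to coincide once one knows they share the dense family from Example~\ref{ex:split_glue}.
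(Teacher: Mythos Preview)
Your proposal is correct and follows the paper's approach: apply Corollary~\ref{cor:cor1}, identify $D$ as the hyperplane section $\{L_1=0\}$, intersect with the nine non-trivial $2$-torsion points, and eliminate to obtain~\eqref{eq:disc_vanishes2}. The only difference is in the identification of~\eqref{eq:disc_vanishes2} with the $2$-isogeny condition: the paper simply observes the match with Example~\ref{ex:split_glue} and cites Proposition~2.2 and Corollary~2.3 of Frey--Kani for the general characterisation of reducible~$C$, whereas you spell out a density-plus-degree-count argument in its place; the sanity checks you raise about dehomogenisation and the elimination ideal being exactly principal are not discussed in the paper, which reports the computational output directly.
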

\par
Equation~\eqref{eq:disc_vanishes2} can be thought of as the analogue of~$\Phi_2(j(E_1),j(E_2))=0$ that is specific to our choice of~$\isom$. Here~$\Phi_2$ denotes the classical modular polynomial
\begin{align*}
\Phi_2(X,Y) = &\:X^3 + Y^3 - X^2Y^2 + 1488(X^2Y+XY^2) - 162000(X^2+Y^2)\:+\\
& + 40773375XY + 8748000000(X+Y) - 157464000000000.
\end{align*}
\par
The abelian surface $J$ can be found explicitly as the quotient of the variety~$A$ under the group action of~$G$, where $G$ acts by point translation (see Lecture~10 in~\cite{harris}, for example). Since~$\isog^*$ is injective and $\dim_K(L(nC))=n^2$ for every $n\in\NN$, we have that the subspace~$L(nD)^G=\isog^*(L(nC))$ of~$G$-invariants of~$L(nD)$ is of dimension~$n^2$ for every~$n\in\NN$. In particular, since $\LL(3C)$ is very ample, by finding nine linearly independent $G$-invariant elements of~$L(3D)$, we can obtain~$\isog$ as a map to~$\PP^8$. We may take the nine $G$-invariant forms~$L_i^3$ for this purpose. Unsurprisingly, explicitly computing~$\isog(A)$ is not feasible.
\par
From now on, let us assume that $3a^2b^2 + a^3 + b^3 - 3ab + 2 \neq 0$ so that~$C$ is irreducible. It follows that the global sections of~$\LL(2C)$ define the canonical map~$\kappa\colon J\to \KK$, where~$\KK=J/[\minus 1]\subset\PP^3$ is a Kummer surface (Proposition~4.23 in~\cite{16-6}). Therefore the four-dimensional $G$-invariant subspace $L(2D)^G\subset L(2D)$ defines the composition~$\psi=\kappa\comp\isog$. We have that~$\psi(D)$ is a conic in~$\PP^3$ and the image under~$\psi$ of the~\mbox{$2$-torsion} points that lie on~$D$ gives six pairwise distinct (geometric) points on~$\psi(D)$ that are the branch locus of the canonical $2$-to-$1$ map $C\to\psi(D)$. By finding a~$K$-rational point on the conic~$\psi(D)$, we obtain an isomorphism~$\psi(D)\stackrel{\sim}{\lra}\PP^1$ and the image of the six branch points gives us a sextic that defines a plane model of a hyperelliptic curve that is in the isomorphism class of~$C$. We can then directly compute the absolute invariants from this model. We may take the following four~$G$-invariant forms to define~$\psi$:
\begin{align*}
X_2X_4 + X_3X_7 + X_6X_8,
&\quad
X_2X_3 + X_4X_6 + X_7X_8,\\
X_2X_8 + X_3X_6 + X_4X_7,
&\quad
X_1^2 + X_5^2 + X_9^2.
\end{align*}
An alternative approach is to compute the curve~$C=\isog(D)$ directly, compute the canonical divisor~$K_C$, and then find the image in~$\PP^1$ of the six points of~$J[2](\kbar)$ that lie on~$C$, under the canonical map defined by~$L(K_C)$. However, this is significantly slower in practice than the Kummer surface approach.
\par
We make an important observation. The absolute invariants of~$C$, as functions of the parameters~$a$ and~$b$, will have certain symmetries. For example, the abelian surface $E_1\times E_2$ is isomorphic to~$E_2\times E_1$ and the isomorphism (which is just a permutation of the coordinates) leaves~$G$ intact so that \mbox{$(E_1\times E_2)/\Gamma_\isom$} and \mbox{$(E_2\times E_1)/\Gamma_\isom$} will give the same absolute invariants. Similarly, the same invariants are obtained if one starts with a pair $(E_1,E_2)\in\hesse^2$ defined by parameters~$(a\w,b\w^2)$ or~$(a\w^2,b\w)$.
\begin{remark}
Recall that for each curve in~$\hesse$ there are eleven other curves in~$\hesse$ that are isomorphic to it, with the exception of the usual two isomorphism classes. It is a natural question to ask which of the $144$ possible isomorphic pairs~$E_1\times E_2$ result in the same isomorphism class of~$C$ when taking the quotient by~$G$. It turns out that the pairs are partitioned into twelve sets of twelve pairs and every pair in the same set gives the same isomorphism class of~$C$. Moreover, all of the remaining eleven choices of~$\isom$ can be reduced to the case we are considering by applying an isomorphism to a suitable product of two elliptic curves in~$\hesse$. Our choice of~$\isom$ is particularly suitable for computations because of the simplicity of the equations defining~$D$ and the equation~\eqref{eq:disc_vanishes2}.
\end{remark}
To obtain the absolute invariants of~$C$ as functions of $(a,b)$, the first thing we do is make several degree estimates. For example, we can take~$a$ and~$b$ to be two large integers of comparable height, such as two large consecutive primes. We can also take~$a$ to be a large integer and~$b\in\{0,1\}$. This gives us estimates for the degrees of particular monomials that appear in the invariants. Then we notice that the so-called discriminant~$J_{10}$, that appears in the denominators, is going to be zero for choices of~$(a,b)$ that either do not define a pair of elliptic curves or do not define a quotient~$J$ that is a Jacobian. By factoring the invariants obtained for various choices of~$a,b\in\ZZ$ and combining this information with the degree estimates, we conclude that, up to a constant, $J_{10}$ equals
\begin{equation}
\label{eq:probable_j10}
(a^3+1)(b^3+1)(3a^2b^2 + a^3 + b^3 - 3ab + 2)^{12}.
\end{equation}
\par
To obtain the numerators, we use interpolation. We compute the absolute invariants of~$C$ for many choices of~$(a,b)$ and multiply by~\eqref{eq:probable_j10} in each case. We conclude from the aforementioned symmetries that the numerators are also linear combinations of monomials~$a^mb^n$, where $m\equiv n$ (mod 3). This significantly reduces the number of non-zero coefficients and makes the computation reasonably fast. Using our empirical bounds on the degrees and the coefficients, we interpolate over finite fields~$\FF_p$ for a suitable set of primes~$p$ and lift the results using the Chinese remainder theorem. Finally, we obtain the Igusa--Clebsch invariants from the Igusa invariants using the formulas in~\cite{mestre}. We summarize our results in the following proposition.
\begin{proposition}
\label{prop:prop2}
Let $E_1$ and $E_2$ be two elliptic curves over $K$, respectively given by the Hesse models 
\begin{align*}
E_1\colon x^3+y^3+z^3+3axyz&=0,\\
E_2\colon x^3+y^3+z^3+3bxyz&=0,
\end{align*}
with identity point $O=[-1:1:0]$, where $a,b\in K$ and $3a^2b^2 + a^3 + b^3 - 3ab + 2\neq 0.$ Let $\isom\colon E_1[3]\stackrel{\sim}{\lra}E_2[3]$ be the isomorphism defined by
$$[-1:0:1] \mapsto [-1:0:1],\quad[-\w:1:0] \mapsto[-\w^2:1:0],$$
and let~$\Gamma_\isom$ denote its graph. Then the principally polarized abelian surface \mbox{$(E_1\times E_2)/\Gamma_\isom$} is isomorphic to the Jacobian of a curve of genus two whose Igusa--Clebsch invariants are as follows:
\allowdisplaybreaks
\begin{align*}
I_2  =\;& 72 (9 a^6 b^6 - 30 (a^7 b^4 + a^4 b^7) - 88 a^5 b^5 + a^8 b^2 + a^2 b^8 + 54 (a^6 b^3 + a^3 b^6) + 65 a^4 b^4+\\
& - 32 (a^7 b + a b^7) - 104 (a^5 b^2 + a^2 b^5) + 40 (a^6 + b^6) +  44 a^3 b^3 + 100 (a^4 b + a b^4)+\\
& - 68 a^2 b^2 + 16 (a^3 + b^3) + 112 a b - 20),\\[.5em]
I_4  =\;& 36 (3 a^2 b^2 + a^3 + b^3 - 3 a b + 2)^4 (9 a^4 b^4 + 240 a^3 b^3 + 8 (a^4 b + a b^4) + 240 a^2 b^2+\\
&+ 160 (a^3 + b^3) + 256 a b + 320),\\[.5em]
I_6  =\;& 72 (3 a^2 b^2 + a^3 + b^3 - 3 a b + 2)^4 (729 a^{10} b^{10} - 3402 (a^{11} b^8 + a^8 b^{11})+\\
&+ 30456 a^9 b^9 +  81 (a^{12} b^6 + a^6 b^{12}) - 70794 (a^{10} b^7 + a^7 b^{10}) - 
 201555 a^8 b^8 + \\
&- 2160 (a^{11} b^5 + a^5 b^{11}) + 60 (a^{12} b^3 + a^3 b^{12}) + 106560 (a^9 b^6 + a^6 b^9) +\\
&-  148932 a^7 b^7 - 121608 (a^{10} b^4 + a^4 b^{10}) + 
 480 (a^{11} b^2 + a^2 b^{11}) +\\
&- 358740 (a^8 b^5 + a^5 b^8)  - 8 (a^{12} + b^{12}) + 156928 (a^9 b^3 + a^3 b^9) +\\
&+ 336444 a^6 b^6 -  50160 (a^{10} b + a b^{10}) + 81072 (a^7 b^4 + a^4 b^7) +\\
&- 462096 a^5 b^5 - 167112 (a^8 b^2 + a^2 b^8) + 84224 (a^9 + b^9) +\\
&+  455568 (a^6 b^3 + a^3 b^6) + 761040 a^4 b^4 + 181152 (a^7 b + a b^7) +\\
&- 93600 (a^5 b^2 + a^2 b^5) +  219552 (a^6 + b^6) + 383424 a^3 b^3 +\\
&+ 564480 (a^4 b + a b^4) + 88512 a^2 b^2 + 74624 (a^3 + b^3) + 314112 a b - 55040),\\[.5em]
I_{10} =\;& 36864 (a^3+1) (b^3+1) (3 a^2 b^2 + a^3 + b^3 - 3 a b + 2)^{12}.
\end{align*}
\end{proposition}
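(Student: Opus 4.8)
The statement has two parts: that $(E_1\times E_2)/\Gamma_\isom$ is a Jacobian whenever $3a^2b^2+a^3+b^3-3ab+2\neq 0$, and that the Igusa--Clebsch invariants of the associated genus-two curve are the displayed rational functions. The plan is to dispatch the first part from the machinery already assembled and then to carry out the computation outlined in the text for the second. The first part is essentially in hand: by the Proposition above, the non-vanishing hypothesis is precisely the condition that $J=A/G$ is not a product of two elliptic curves, so $D$ — and hence $C$, by the lemma comparing their irreducibilities — is irreducible, and combining Weil's theorem (Satz~2 of~\cite{weil}) with Lemmas~\ref{lemma:Jpolarized} and~\ref{lemma:uniqueC} shows that $(J,C)$ is the canonically polarized Jacobian of a smooth curve of genus two. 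So the real work lies in identifying that curve and computing its invariants.

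For the model I would proceed as indicated. Fix the ideal $I(A)$ of $A=\sigma(E_1\times E_2)\subset\PP^8$, the linear inversion $[\minus 1]_A$ of~\eqref{eq:linear_minus_id}, and the two linear automorphisms of $\PP^8$ generating the translation action of $G$; the structural point that these translations are linear comes from $A$ being embedded by $L(3\Theta)$. Among the nine $G$-invariant divisors $\{L_i=0\}$ linearly equivalent to $3\Theta$, the divisor $D$ of Lemma~\ref{lemma:uniqueC} is the unique one also fixed by $[\minus 1]_A$, namely $\{L_1=0\}$, and it misses $O_A$. Since $\LL(2C)$ defines the canonical map $\kappa\colon J\to\KK\subset\PP^3$ onto the Kummer surface (Proposition~4.23 of~\cite{16-6}) and $\dim_K L(2D)^G=4$, the four explicit $G$-invariant quadrics listed above define $\psi=\kappa\comp\isog\colon A\to\PP^3$; then $\psi(D)$ is a conic, and the images under $\psi$ of the six $2$-torsion points forced onto $D$ by Corollary~\ref{cor:cor1} are six distinct points on it, forming exactly the branch locus of the double cover $C\to\psi(D)$. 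Choosing a rational point on the conic (the image of a rational $2$-torsion point supplies one) identifies $\psi(D)\cong\PP^1$ and turns the six branch points into the roots of a binary sextic model of $C$.

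It then remains to compute the invariants of this sextic as functions of $(a,b)$. I would first exploit the symmetries: the isomorphisms $E_1\times E_2\cong E_2\times E_1$ and the substitutions $(a,b)\mapsto(a\w,b\w^2),(a\w^2,b\w)$ fix $G$, so every numerator and denominator is a $\ZZ$-linear combination of monomials $a^mb^n$ with $m\equiv n\pmod 3$, symmetric under $a\leftrightarrow b$. A priori degree bounds for the invariants, read off from the degrees of $\psi$ and the listed quadrics, together with the observation that $J_{10}$ must vanish exactly on the locus where $(a,b)$ fails to give a pair of elliptic curves or fails to give a Jacobian, pin down $J_{10}$ up to a scalar as $(a^3+1)(b^3+1)(3a^2b^2+a^3+b^3-3ab+2)^{12}$. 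With $J_{10}$ and the degree bounds fixed, one determines the numerators by interpolating the values of $J_{10}$ times each absolute invariant over sufficiently many primes $p$, reconstructing over $\FF_p$ under the monomial constraint, and lifting by the Chinese remainder theorem; agreement on a number of evaluation points exceeding the degree bound certifies the outcome, after which Mestre's formulas~\cite{mestre} convert the Igusa invariants into $I_2,I_4,I_6,I_{10}$. A final check that $I_{10}$ vanishes precisely when $a^3=-1$, $b^3=-1$, or $3a^2b^2+a^3+b^3-3ab+2=0$ confirms consistency with the degenerate loci.

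The main obstacle is making the last step rigorous: one needs explicit a priori bounds on the degrees (and coefficient heights) of the invariants in $a$ and $b$, so that finitely many numerical evaluations provably determine them; without such bounds the interpolation is merely heuristic, and this is where the bulk of the careful work and the risk of arithmetic slips lie. A secondary, lesser difficulty is the passage in the middle step from the abstract quotient $A/G$ to a concrete equation — it rests on correctly matching the four $G$-invariant quadrics with a basis of $L(2C)$ and on the existence of a rational point on the conic $\psi(D)$ — but the Kummer-surface description of $\kappa$ together with the full rationality of the $2$-torsion of $E_1\times E_2$ make this routine.
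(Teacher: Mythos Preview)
Your proposal is correct and follows essentially the same computational route as the paper: identify $D$ as the $G$- and $[\minus 1]$-invariant hyperplane section $\{L_1=0\}$, map to the Kummer surface via the four listed $G$-invariant quadrics, read off a sextic from the conic $\psi(D)$ and its six branch points, and recover the invariants by symmetry-constrained interpolation over finite fields with CRT lifting, followed by Mestre's conversion. One small slip: it is the $3$-torsion, not the $2$-torsion, of a Hesse curve that is fully $K$-rational, so your source of a rational point on $\psi(D)$ needs adjusting --- but this is harmless for computing the (geometric) Igusa--Clebsch invariants, and the paper likewise just asserts that a rational point can be found.
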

\par

When $K$ is a number field or a finite field of characteristic $\ch(K)>5$, it is possible to construct a genus-$2$ curve over~$K$ with given Igusa--Clebsch invariants (see~\cite{cardona} and~\cite{mestre}). When~$K$ is a number field, the recent work of Bruin, Sijsling, and Zotine~\cite{sijsling} allows one to verify numerically over~$\CC$ that a curve obtained from Igusa--Clebsch invariants of the form above indeed has a $(3,3)$-split Jacobian.
\par

\begin{ack} The author wishes to thank Robin de Jong and Fabien Pazuki for the fruitful discussions that inspired this paper. The author is also grateful to Ronald van Luijk, Christophe Ritzenthaler, and Jeroen Sijsling for their helpful comments and remarks. Portions of the paper have already appeared in the author's Ph.D. thesis, completed at the universities of Leiden and Bordeaux. Most of the work presented was completed during the author's stay at the University of Ulm.
\end{ack}
\setcounter{section}{0}
\setcounter{equation}{0}
\renewcommand{\theequation}{\thesection.\arabic{equation}}
\setcounter{figure}{0}
\setcounter{table}{0}

\appendix
\section*{Appendix}
\label{appendix}
\renewcommand{\thesection}{A}
\subsection{Hesse pencil morphisms}
\label{app:morphisms}
Given a smooth projective plane curve over~$K$ of the form
\begin{equation}
x^3+y^3+z^3+\lambda xyz=0
\label{eq:hesse_eqn_appendix}
\end{equation}
for $\lambda\in K$, we give it the structure of an elliptic curve as follows. The identity element is $O=[1:-1:0]$. The inversion morphism is given by
$$
[x:y:z]\mapsto[y:x:z].
$$
The addition morphism is given by
$$
\left([x_1:y_1:z_1],[x_2:y_2:z_2]\right)\mapsto[
y_1^2 x_2 z_2 - y_2^2 x_1 z_1 :
x_1^2 y_2 z_2-x_2^2 y_1 z_1 :
z_1^2 x_2 y_2  - z_2^2 x_1 y_1].
$$
The point duplication morphism is given by
$$
[x:y:z]\mapsto[y (x^3 - z^3) : x (z^3 - y^3) : z (y^3 - x^3)].
$$
One can easily obtain the formulas for the corresponding morphisms on the product of two curves of the form~\eqref{eq:hesse_eqn_appendix}, embedded into $\PP^8$ via the Segre embedding.

\subsection{Examples}
\label{app:examples}
\setcounter{theorem}{0}
We will make use of the notations from Section~\ref{section:gluing33}.
\begin{example}
Let $a=b=0$ so that $A\cong E^2$, where $E$ is the Fermat curve~\mbox{$x^3+y^3+z^3=0$.} Then by Proposition~\ref{prop:prop2} we have that $A/G\cong\Jac(C)$, where $C$ is a genus-$2$ curve with Igusa--Clebsch invariants $[-90 : 720 : -15480 : 144]$. Applying Mestre's algorithm yields an affine curve
$$
C\colon dy^2 = (x^3+5)(4x^3+5),
$$
for some $d\in K$. We recognize this as a case of~\eqref{eq:generic_C}. The curve~$C$ admits maps
\begin{equation}
\label{eq:example1}
\begin{split}
\phi_1\colon (x,y)&\mapsto \left(
-\frac{15x^2}{x^3 + 5},\:
y\frac{5(x^3 - 10)}{(x^3 + 5)^2}
\right),\\
\phi_2\colon (x,y)&\mapsto \left(
-\frac{75x}{4x^3 + 5},\:
y\frac{25(8x^3 - 5)}{(4x^3 + 5)^2}
\right),
\end{split}
\end{equation}
whose images are elliptic curves, respectively defined by affine plane models
\begin{align*}
E_1\colon & dy^2 = x^3 + 100,\\
E_2\colon & dy^2 = x^3 + 625.
\end{align*}
The constants in~\eqref{eq:example1} are chosen for the simplicity of the models. As expected, we have $j(E)=j(E_1)=j(E_2)=0$. Moreover, $\phi_1$ ramifies at $(0,\pm 5)$ and $\phi_2$ ramifies at~$\pm\infty$. The ramification points lie above rational order-$3$ points of the corresponding elliptic curves in both cases; these are the points $(0,\pm 10)$ and~$(0,\pm 25)$, respectively. In particular, the ramification does not occur above $2$-torsion points.
\end{example}

\begin{example}[\S\ref{subsub:both_special} revisited]
Let $C$ be a genus-$2$ affine curve defined by $$dy^2=x(x^2 + 1)(4x^2 + 3)$$
for some $d\in K$. This curve admits $3$-to-$1$ coverings
\begin{align*}
\phi_1\colon C\to E_1,&\quad(x,y)\mapsto \left(
\frac{1}{x(4x^2 + 3)},\:
y\frac{4x^2 + 1}{x^2(4x^2 + 3)^2}
\right),\\
\phi_2\colon C\to E_2,&\quad(x,y)\mapsto \left(
\frac{4x^3}{x^2 + 1},\:
y\frac{4x(x^2 + 3)}{(x^2 + 1)^2}
\right),
\end{align*}
where the images are elliptic curves defined by
\begin{align*}
E_1\colon & dy^2 = x^3 + x,\\
E_2\colon & dy^2 = x^3 + 108x.
\end{align*}
We have $j(E_1)=j(E_2)=1728$. Moreover, $\infty$ is a triple ramification point for $\phi_1$ and~$(0,0)$ is a triple ramification point for~$\phi_2$. Both points lie above~$(0,0)$, which is a point of order two on both~$E_1$ and~$E_2$. Suppose that $\sqrt{3}\in K$ (i.e. $\sqrt{-1}\in K$, given that $\w\in K$) and let \mbox{$a=b=-1+\sqrt{3}$.} This parameter defines $E\in\hesse$ with $j(E)=1728$ and by Proposition~\ref{prop:prop2} we have that $E^2/G$ is principally polarized by a genus-$2$ curve whose Igusa--Clebsch invariants are $[774 : 9648 : 2763360 : 27648]$; these are easily verified to be the invariants of $C$.
\end{example}

\begin{example}[\S\ref{subsub:both_special} revisited]
\label{example:bad_conclusion}
Let $C$ be a genus-$2$ affine curve defined by $$dy^2=x(2x^2 + 4x + 3)(3x^2 + 4x + 2)$$
for some $d\in K$. Then $C$ has Igusa--Clebsch invariants $[86:13456:471968:6718464]$ and it admits $3$-to-$1$ coverings
\begin{align*}
\phi_1\colon C\to E_1,&\quad(x,y)\mapsto \left(
\frac{18x^3}{3 x^2 + 4 x + 2},\:
y\frac{18x(3 x^2 + 8 x + 6)}{(3 x^2 + 4 x + 2)^2}
\right),\\
\newoverline[.9]{\phi}_1\colon C\to E_1,&\quad(x,y)\mapsto \left(
\frac{18}{x (2 x^2 + 4 x + 3)},\:
y\frac{18(6x^2 + 8 x + 3)}{x^2 (2 x^2 + 4 x + 3)^2}
\right),
\end{align*}
where $E_1\colon dy^2 = x(x^2 + 44x + 486)$. We have $j(E_1)=-873722816/59049$. Note that $\newoverline[.9]{\phi}_1=\phi_1\comp \xi$, where $\xi\in\Aut(C)$ is given by $\xi(x,y)=(1/x,y/x^3)$. The maps ramify above $(0,0)$, which is a $2$-torsion point, and send disjoint sets of three \W points to $\infty$, i.e. the identity point. However, the two coverings are not complementary. The curve $C$ admits another pair of $3$-to-$1$ coverings, which are generic. For example, we can take
\begin{align*}
\phi_2\colon C\to E_2,&\quad(x,y)\mapsto \left(
\frac{2x^3 + 5x^2 + 4x - 2}{3x^2 + 4x + 2},\:
y\frac{2(x+2)(x^2+2)}{(3x^2 + 4x + 2)^2}
\right),\\
\newoverline[.9]{\phi}_2\colon C\to E_2,&\quad(x,y)\mapsto \left(
\frac{-2x^3 + 4x^2 + 5x + 2}{x (2 x^2 + 4 x + 3)},\:
y\frac{2(2x+1)(2x^2+1)}{x^2 (2 x^2 + 4 x + 3)^2}
\right),
\end{align*}
where $E_2\colon y^2 = x^3 - x^2 + x  + 3$ and $j(E_2)=64/9$. As before, we also have $\newoverline[.9]{\phi}_2=\phi_2\comp\xi$. The map~$\phi_2$ ramifies at the two points with $x=-2/3$, whereas $\newoverline[.9]{\phi}_2$ ramifies at the two points with $x=-3/2$. The ramification occurs above points of infinite order. By using Proposition~\ref{prop:prop2}, we can show that if $E$ and $E'$ are elliptic curves that can be glued along their $3$-torsion to give a Jacobian of a genus-$2$ curve in the isomorphism class of~$C$ and $j(E)=-873722816/59049$, then $j(E')=64/9$. Therefore~$\phi_1$ and~$\phi_2$ are complementary in the sense of Definition~\ref{def:complement}, whereas $\phi_1$ and $\newoverline[.9]{\phi}_1$ are not.
\end{example}

\subsection{General formulas for coverings of degree 3}
\label{app:explicit_covering}
More generally, recalling notations from \S~\ref{sub:generic} and omitting all conditions on the parameters $a,b,c,d\in K$, let
\begin{align*}
P(x)&=x^3+ax^2+bx+c, & Q(x)&=4cx^3+ b^2x^2 + 2bcx + c^2,\\
f_1(x)&=\frac{x^2}{P(x)}, & f_2(x)&=\frac{(b x + 3 c)^2 ((b^3 - 4 a b c + 9 c^2) x + c (b^2 - 3 a c))}{Q(x)}.
\end{align*}
Then the genus-$2$ affine curve $C$ defined by $dy^2=P(x)Q(x)$ admits degree-$3$ coverings
\begin{equation}
\label{eq:covering_formula}
\begin{split}
\phi_1\colon C\to E_1,&\quad(x,y) \mapsto \left(
\mapd1(x),\:\frac{y}{x}\mapd1'(x)
\right),\\
\phi_2\colon C\to E_2,&\quad(x,y) \mapsto \left(
\mapd2(x),\:\frac{y}{b x + 3 c}\mapd2'(x)
\right),
\end{split}
\end{equation}
where $\mapdi'(x)=\frac{\d}{\d x}\mapdi(x)$. The elliptic curves $E_1$ and $E_2$ are defined by
\begin{align*}
E_1\colon & \frac{d}{\Delta_1}y^2 = x^3 + \frac{2(-a b^2 + 6 a^2 c - 9 b c)}{\Delta_1} x^2 + \frac{(b^2 - 12 a c)}{\Delta_1} x + \frac{4 c}{\Delta_1},\\[.5em]
E_2\colon & \Delta_2dy^2 = x^3 + (a b^3 - 27 b^2 c + 54 a c^2) x^2 + (b^7 - 18 a b^5 c + 54 a^2 b^3 c^2 +\\
& +189 b^4 c^2 - 972 a b^2 c^3 + 729 a^2 c^4 + 729 b c^4) x - c (2 b^3 - 9 a b c + 27 c^2)^3,
\end{align*}
where
\begin{align*}
\Delta_1 &= a^2 b^2 - 4 b^3 - 4 a^3 c + 18 a b c - 27 c^2,\\
\Delta_2 &= b^3 - 27 c^2.
\end{align*}
Suppose instead that, as in~\S\ref{subsub:second_special}, we have
\begin{align*}
P(x)&=(bx+3c)(9cx^2+2b^2x+3bc), & Q(x)&=4cx^3+ b^2x^2 + 2bcx + c^2,\\
f_1(x)&=\frac{x^2}{P(x)}, & f_2(x)&=\frac{(b x + 3 c)^3}{Q(x)}.
\end{align*}
Then the genus-$2$ affine curve $C$ defined by $dy^2=P(x)Q(x)$ admits degree-$3$ coverings defined by the formulas in~\eqref{eq:covering_formula}, where
\begin{align*}
E_1\colon & \frac{9bd}{4\Delta^3}y^2 = x^3 + \frac{3}{\Delta} x^2 - \frac{3(5b^3 + 108c^2)}{4\Delta^3} x + \frac{1}{\Delta^3},\\[.5em]
E_2\colon & \frac{d}{c}y^2 = x^3 + \frac{2\Delta}{c}x^2 - 27\Delta x,
\end{align*}
and $\Delta=b^3 - 27c^2$.
\subsection{Proof of Lemma~\ref{lemma:converse}}
\label{app:proof}
\begin{proof}
Suppose that $E$ is defined by the \W equation $$F(x,y,z)=-y^2z + x^3 + axz^2 + bz^3=0.$$ Then the Hessian of $F$ is given by $H(x,y,z)=3xy^2 + 3ax^2z + 9bxz^2 - a^2z^3$. The intersection of $E$ and the curve defined by $H(x,y,z)=0$ consists of the nine inflection points of $E$, that are all $K$-rational by assumption. Computing the intersection explicitly gives us the kernel polynomial
\begin{equation}
\label{eq:one}
3x^4 + 6ax^2 + 12bx - a^2,
\end{equation}
which must split completely over~$K$. Suppose that the (necessarily pairwise distinct) roots of~\eqref{eq:one} are $t_1,t_2,t_3,t_4\in K$. Expanding $(x-t_1)\cdots(x-t_4)$ and equating with~\eqref{eq:one} gives
\begin{align*}
\label{eq:two}
\begin{split}
t_4 &= -t_1-t_2-t_3,\\
-2a &= t_1^2 + t_1t_2 + t_2^2 + t_1t_3 + t_2t_3 + t_3^2,\\
4b &= (t_1 + t_2)(t_1 + t_3)(t_2 + t_3),\\
a^2 &= 3t_1t_2t_3(t_1 + t_2 + t_3).
\end{split}
\end{align*}
Eliminating $a$ and $b$, we obtain that $(t_1,t_2,t_3)$ lies on the union of the surface
\begin{equation}
\label{eq:three}
t_1^2  + \w t_2^2 + \w^2 t_3^2 - 2\w^2 t_1t_2 - 2\w t_1t_3 - 2t_2t_3 = 0
\end{equation}
and its Galois conjugate. By renaming the roots if necessary, we can and do assume that~\eqref{eq:three} holds. Now let
$$
t = \frac{3 t_1 + (5 + \w) t_2 + (4 - \w) t_3}{(1 + 2 \w) (t_2 - t_3)},\quad
u =\frac{12(t_1 + (2 + \w)t_2 + (1 - \w)t_3 )}{(t_2 - t_3)^2}.
$$
Then $t^3\neq -1$ and
\begin{align*}
au^2 &= -3t(t^3 - 8),\\
bu^3 &= -2(t^6 + 20 t^3 - 8).
\end{align*}
Finally, the elliptic curve defined by the \W equation
$$
-y^2z + x^3 -3t(t^3 - 8)xz^2 -2(t^6 + 20 t^3 - 8)z^3=0
$$
is isomorphic to the element of $\hesse$ defined by $$x^3+y^3+z^3+3txyz=0$$ via the isomorphism
$$
[x:y:z]\mapsto[
3 t x - (1 + 2 \w) y + 3 (t^3 + 4) z:
3 t x + (1 + 2 \w) y + 3 (t^3 + 4) z:
6(x - 3t^2z)].
$$
\end{proof}
\bibliographystyle{plain}
\bibliography{ms}
\end{document}